  \newtheorem{thm}{Theorem}[section]
  \newtheorem{lem}[thm]{Lemma}
  \newtheorem{prop}[thm]{Proposition}
  \newtheorem{cor}[thm]{Corollary}
  \theoremstyle{definition}
  \newtheorem{defn}[thm]{Definition}
  \newtheorem{exm}[thm]{Example}
  \newtheorem{rmk}[thm]{Remark}
 \newcommand\mar{\rightsquigarrow}
 \newcommand\ra{\rightarrow}
 \newcommand\s{\subseteq}
 \newcommand\bsim{\backsim}
 \numberwithin{equation}{section}
\def\iff{if and only if }
\def\iff{if and only if }
\begin{document}

\title{ Pseudo Equality Algebras -- Revision }
\author{ Anatolij Dvure\v{c}enskij$^{^{1,2}}$, Omid Zahiri \\
{\small\em $^1$Mathematical Institute,  Slovak Academy of Sciences,}\\
{\small\em\v Stef\'anikova 49, SK-814 73 Bratislava, Slovakia} \\
{\small\em $^2$Depart. Algebra  Geom.,  Palack\'{y} Univer.17. listopadu 12,}\\
{\small\em CZ-771 46 Olomouc, Czech Republic} \\
%{\small\em  $^{3}$Department of Mathematics, Shahid Beheshti University, G. C., Tehran, Iran}\\
{\small\tt  dvurecen@mat.savba.sk\quad   om.zahiri@gmail.com} }
\date{}
\maketitle
\begin{abstract}
Recently Jenei introduced a new structure called equality algebras which is inspired by ideas of BCK-algebras with meet. These algebras were generalized by Jenei and K\'or\'odi to pseudo equality algebras which are aimed to find a connection with pseudo BCK-algebras with meet. We show that every pseudo equality algebra is an equality algebra. Therefore, we define a new type of pseudo equality algebras which more precisely reflects the relation to pseudo BCK-algebras with meet in the sense of Kabzi\'nski and Wro\'nski. We describe congruences via  normal closed deductive systems, and we show that the variety of pseudo equality algebras is subtractive, congruence distributive and congruence permutable.
\end{abstract}

{\small {\it AMS Mathematics Subject Classification (2010)}: 03G25, 06F05, 06F35. }

{\small {\it Keywords:} Pseudo equality algebra, Pseudo $BCK$-meet-semilattice, Pseudo hoop,
Normal deductive system, Congruence relation.}

{\small {\it Acknowledgement:} This work was supported by  the Slovak Research and Development Agency under contract APVV-0178-11,  grant VEGA No. 2/0059/12 SAV, and
CZ.1.07/2.3.00/20.0051.}

\section{ Introduction }%1

In the last decade, pseudo BCK-algebras introduced by Georgescu and Iorgulescu \cite{Gor}, which are a non-commutative generalization of
BCK-algebras, are intensively studied. They are inspired by a non-commutative fuzzy logic. Besides pseudo BCK-algebras, there are also other non-commutative generalizations of known commutative algebras connected with logics: pseudo MV-algebras \cite{GeIo} (equivalently generalized MV-algebras by \cite{Rac}), pseudo BL-algebras, \cite{DGI1,DGI2}, pseudo hoops \cite{GLP}, etc.

Equality algebras were introduced in \cite{J} by Jenei to find something similar to $EQ$-algebras, introduced by Nov\'ak and Baets in \cite{ND}, but without a product. These algebras are assumed for a possible algebraic semantics of fuzzy type theory. An {\it equality algebra} is an algebra $(X;\sim,\wedge,1)$ of type $(2,2,0)$
such that the following axioms are fulfilled for all $a,b,c\in X$:
\begin{itemize}
\item[$(E1)$] $(X;\wedge,1)$ is a meet-semilattice with top element 1;
\item[$(E2)$] $a\sim b=b\sim a$;
\item[$(E3)$] $a\sim a=1$;
\item[$(E4)$] $a\sim 1=a$;
\item[$(E5)$] $a\leq b\leq c$ implies that $a\sim c\leq b\sim c$ and $a\sim c\leq a\sim b$;
\item[$(E6)$] $a\sim b\leq (a\wedge c)\sim (b\wedge c)$;
\item[$(E7)$] $a\sim b\leq (a\sim c)\sim (b\sim c)$.
\end{itemize}
He defined a closure operator in the class of equality algebras, and proved the term
equivalence of the closed algebras (called them equivalential equality algebras) to
$BCK$-meet-semilattices.

Recently, a new algebraic structure called a pseudo equality algebra has been
defined by Jenei and K\'{o}r\'{o}di \cite{JK} as a generalization of equality algebras:

\begin{defn}\label{2.2}\cite{JK}
A pseudo equality algebra is an algebra $(X;\sim,\bsim,\wedge,1)$ of type $(2,2,2,0)$
satisfying the following axioms, for all $a,b,c\in X$:
\begin{itemize}
\item[$(F1')$] $(X;\wedge,1)$ is a meet-semilattice with top element 1;
\item[$(F2')$] $a\sim b=b\sim a$ and $a\bsim b=b\bsim a$;
\item[$(F3')$] $a\sim a=1=a\backsim a$;
\item[$(F4')$] $a\sim 1=a=a\backsim 1$;
\item[$(F5')$] $a\leq b\leq c$ implies that $a\sim c\leq b\sim c$, $a\sim c\leq a\sim b$,
$c\backsim a\leq c\backsim b$ and $c\backsim a\leq b\backsim a$;
\item[$(F6')$] $a\sim b\leq (a\wedge c)\sim (b\wedge c)$ and $a\bsim b\leq (a\wedge c)\bsim (b\wedge c)$;
\item[$(F7')$] $a\sim b\leq (a\sim c)\bsim (b\sim c)$ and $a\bsim b\leq (a\bsim c)\sim (b\bsim c)$.
\end{itemize}
\end{defn}
The authors attempted to generalized the results for pseudo equality algebra,
showing that equivalential pseudo equality algebras are term equivalent with
pseudo $BCK$-meet-semilattices and provided an equational characterization for
the equivalence operations of pseudo $BCK$-meet-semilattices which corresponds to ideas by Kabzi\'nski and Wro\'nski \cite{KaWr}. They also, proved that the variety of pseudo equality algebras is a subtractive 1-regular, arithmetical variety. In \cite{Ciu}, Ciungu found a gap in the proof
of a theorem of the paper \cite{JK} and she presented a counterexample and a correct version of it. The correct version of the corresponding result for equality algebras was also given.

The present paper is inspired by our simple observation that every pseudo equality algebra is in fact an equality algebra. Therefore, we introduce a new kind of pseudo equality algebras which will fit all ideas of Kabzi\'nski and Wro\'nski as well as of Jenei and K\'or\'odi. Our results may be assumed as an additional step in establishing an axiomatization of special subclasses of substructural logics.

We show an intimate relation of pseudo  equality algebras with a special class of pseudo equality algebras with meet. We describe the variety of pseudo equality algebras, we present the congruence lattice and we show how congruences are closely bind with closed normal deductive systems.

The paper is organized as follows. Section 2 gives a new definition of pseudo equality algebras. We show some examples a describe some important properties of the algebras. Section 3 shows a relation between pseudo equality algebras and pseudo BCK-algebras with meet. In Section 4, we describe congruences and deductive systems. We show when we can create a quotient. Finally, we prove that the variety of pseudo equality algebras is subtractive, congruence permutable and congruence distributive.

%%%%%%%%%%%%%%%%%%%%%%%%%%%%%%%%%%%%%%%%%%%%%%%%%%%%%%%%%%%%%%%%%%%%%%%
%%%%%%%%%%%%%%%%%%%%%%%%%%%%%%%%%%%%%%%%%%%%%%%%%%%%%%%%%%%%%%%%%%%%%%%
%%%%%%%%%%%%%%%%%%%%%%%%%%%%%%%%%%%%%%%%%%%%%%%%%%%%%%%%%%%%%%%%%%%%%%%
\section{New definition of pseudo equality algebras}%3

In the section, we show that any pseudo equality algebra in the sense of  \cite{JK} is always an equality algebra. Therefore, we introduce a new type of pseudo equality algebras and we describe their basic properties.

Consider Definition \ref{2.2}. If $(A;\sim,\backsim,\wedge,1)$ is a pseudo equality algebra,
then $(A;\wedge)$ is a
meet semi-lattice and so the relation $x\leq y$ if and only if $x\wedge y=x$ is a partially order relation on $A$.
By $(F7')$, for all $a,b,c\in A$, we have
$$
a\sim b\leq (a\sim c)\backsim (b\sim c), \quad a\backsim b\leq (a\backsim c)\sim (b\backsim c).
$$
Specially, for $c=1$, we get that
$a\sim b\leq (a\sim 1)\backsim (b\sim 1)$ and $a\backsim b\leq (a\backsim 1)\sim (b\backsim 1)$
so by $(F4)$,
$a\sim b\leq (a\sim 1)\backsim (b\sim 1)=a\backsim b$ and $a\backsim b\leq (a\backsim 1)\sim (b\backsim 1)=a\sim b$.
It follows that $a\sim b=a\backsim b$, for all $a,b\in A$ and so $(A;\wedge,\sim,\backsim,1)$
is an equality algebra. Therefore, {\it every pseudo equality algebra is  an equality algebra} in the sense of Definition \ref{2.2}.

In the next definition, we will propose a new definition of pseudo equality algebras that will not imply that they are in fact equality algebras. They are inspired by some properties of pseudo BCK-algebras that are also a $\wedge$-semilattice.

\begin{defn}\label{3.1}
A {\it pseudo equality algebra} is an algebra $(X;\sim,\bsim,\wedge,1)$ of type $(2,2,2,0)$ that
satisfies the following axioms, for all $a,b,c\in X$:
\begin{itemize}
\item[$(F1)$] $(X;\wedge,1)$ is a meet-semilattice with top element 1;
\item[$(F2)$] $a\sim a=1=a\backsim a$;
\item[$(F3)$] $a\sim 1=a=1\backsim a$;
\item[$(F4)$] $a\leq b\leq c$ implies that $a\sim c\leq b\sim c$, $a\sim c\leq a\sim b$,
$c\backsim a\leq c\backsim b$ and $c\backsim a\leq b\backsim a$;
\item[$(F5)$] $a\sim b\leq (a\wedge c)\sim (b\wedge c)$ and $a\bsim b\leq (a\wedge c)\bsim (b\wedge c)$;
\item[$(F6)$] $a\sim b\leq (c\sim a)\bsim (c\sim b)$ and $a\bsim b\leq (a\bsim c)\sim (b\bsim c)$;
\item[$(F7)$] $a\sim b\leq (a\sim c)\sim (b\sim c)$ and $a\bsim b\leq (c\bsim a)\bsim (c\bsim b)$.
\end{itemize}
\end{defn}

\begin{rmk}\label{re:3.2}
(1) Property $(F4)$ can be rewritten in the following form of equations:
\begin{itemize}
\item[$(F4')$] $(a\wedge b \wedge c)\sim c\le (b\wedge c)\sim c$, $(a\wedge b \wedge c)\sim c \le (a\wedge b \wedge c)\sim (b\wedge c)$, $c \backsim (a\wedge b \wedge c) \le c \backsim (b\wedge c)$ and $c\backsim (a\wedge b \wedge c) \le (b\wedge c)\backsim (a\wedge b \wedge c)$.
\end{itemize}
Consequently, the class of pseudo equality algebras forms a variety.

(2) We note that $(F5)$ implies the second and fourth inequality in $(F4)$.

(3) In addition, if $(X;\sim,\wedge,1)$ is an equality algebra, then $(X;\sim,\sim,\wedge, 1)$ is a pseudo equality algebra.

\end{rmk}

\begin{rmk}\label{3.2}
If $(X;\sim,\bsim,\wedge,1)$ is a pseudo equality algebra such that $\sim$ and $\bsim$ are commutative
binary operations on $X$, then by $(F2)$ and $(F6)$, we have
$a\sim b\leq (1\sim a)\bsim (1\sim b)=a\bsim b$ and $a\bsim b\leq (1\bsim a)\sim (1\bsim b)=a\bsim b$
for all $a,b\in X$. Hence $\sim=\bsim$ and $(X;\sim,\wedge,1)$ is an equality algebra.
\end{rmk}
\begin{cor}\label{3.6}
If $(X;\sim,\bsim,\wedge,1)$ is a pseudo equality algebra such that $1\sim a=a=a\bsim 1$
for all $a\in X$, then
 $\sim=\bsim$ and $(X;\sim,\wedge,1)$ is an equality algebra.
\end{cor}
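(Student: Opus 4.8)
The plan is to reduce the statement to Remark~\ref{3.2} by showing that the extra unit laws in the hypothesis force both fundamental operations to be commutative. Observe first that $(F3)$ only supplies $a\sim 1=a$ and $1\bsim a=a$, so the hypothesis genuinely adds the still missing identities $1\sim a=a$ and $a\bsim 1=a$; these are exactly what the argument will use.

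To obtain commutativity of $\sim$, I would specialize the first inequality of $(F7)$, namely $a\sim b\le (a\sim c)\sim(b\sim c)$, to the case $c=a$. By $(F2)$ the factor $a\sim a$ equals $1$, so the right-hand side becomes $1\sim(b\sim a)$, which by the hypothesis is just $b\sim a$. Hence $a\sim b\le b\sim a$ for all $a,b\in X$, and interchanging the roles of $a$ and $b$ gives the reverse inequality, so $a\sim b=b\sim a$. Dually, specializing the second inequality of $(F7)$, namely $a\bsim b\le (c\bsim a)\bsim(c\bsim b)$, to the case $c=b$ turns the right-hand side into $(b\bsim a)\bsim 1=b\bsim a$ (using $(F2)$ and the hypothesis $a\bsim 1=a$); thus $a\bsim b\le b\bsim a$, and symmetry again yields equality.

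With $\sim$ and $\bsim$ both commutative, Remark~\ref{3.2} applies verbatim and delivers $\sim=\bsim$ together with the assertion that $(X;\sim,\wedge,1)$ is an equality algebra, which completes the proof. One can also finish without citing the remark: putting $c=1$ in the two inequalities of $(F6)$ gives $a\sim b\le (1\sim a)\bsim(1\sim b)=a\bsim b$ and $a\bsim b\le (a\bsim 1)\sim(b\bsim 1)=a\sim b$, hence $\sim=\bsim$, after which $(E1)$--$(E7)$ are read off from $(F1)$--$(F7)$, with $(E2)$ being precisely the commutativity established above. I do not anticipate any genuine obstacle here; the only point worth isolating is the remark in the first paragraph about which unit laws hold automatically and which do not, since $(F3)$ alone is not enough to collapse the right-hand sides of the specialized $(F7)$ inequalities.
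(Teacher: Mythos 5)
Your proof is correct and follows essentially the same route as the paper's: first establish commutativity of both $\sim$ and $\bsim$ from the extra unit laws, then invoke Remark~\ref{3.2}. The only cosmetic difference is that you specialize $(F7)$ where the paper specializes $(F6)$ (at $c=b$, giving $a\sim b\le (b\sim a)\bsim(b\sim b)=(b\sim a)\bsim 1=b\sim a$, and similarly for $\bsim$); both specializations work under the stated hypotheses.
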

\begin{proof}
Let $a,b\in X$. By $(F6)$,
$a\sim b\leq (b\sim a)\bsim (b\sim b)=(b\sim a)\bsim 1=b\sim a$, similarly,
$b\sim a\leq a\sim b$ and so $a\sim b=b\sim a$. In a similar way, we can show that
$a\bsim b=b\bsim a$ and so by Remark \ref{3.2}, $(X;\sim,\wedge,1)$ is an equality algebra.
\end{proof}

Now we show two classes of pseudo equality algebras. The first one is connected with the negative cones of $\ell$-groups.

\begin{exm}\label{ex:l-group}
Let $(G;\cdot,^{-1},e,\le)$ be an $\ell$-group (= lattice ordered group) written multiplicatively with an inversion $^{-1}$ and the identity element $e$, equipped with a lattice order $\le$ such that $a\le b$ entails $cad\le cbd$ for all $c,d \in G$. We denote $G^+=\{g \in G: e\le g\}$ and $G^-=\{g \in G: g \le e\}$  the positive and negative cone, respectively, of $G$. If we endow the negative cone $G^-$ with two binary operations $a\sim b = (a b^{-1})\wedge e$, $a\backsim b = (a^{-1} b)\wedge e$,  then $(G^-;\sim,\backsim,\wedge, e)$ is an example of a pseudo hoop (see also the next example). We have $a\sim b = b\backsim a$ \iff $G$ is Abelian.
\end{exm}

The second class is more general and is connected with pseudo hoops that were presented in \cite{GLP} and which were originally introduced by Bosbach in \cite{Bos1, Bos2} under the name ``residuated integral monoids''.

\begin{exm}\label{phooh}
A {\it pseudo hoop} is an algebra $\mathbf (X;\odot,\ra,\mar,1)$ of type
$(2,2,2,0)$ if the following holds, for all $a,b \in X$:
\begin{enumerate}
\item[(i)] $a \odot 1 = 1\odot a=a$;
\item[(ii)] $a\ra a=1=a\mar a$;
\item[(iii)] $(a\odot b)\ra c = a \ra    (b\ra a)$;
\item[(iv)] $(a\odot b)\mar c= b \mar (a\mar c)$;
\item[(v)] $(a \ra b)\odot a= (b\to a)\odot a= a \odot (a\mar b)=b\odot (b\mar a)$.
\end{enumerate}
Then $X$ is a $\wedge$-semilattice, where $a\wedge b = a \odot(a\mar b)$.

If we set $a\sim b = b\ra a$ and $a\backsim b= a\mar b$, then $(X;\sim,\backsim,\wedge, 0)$ is a pseudo equality algebra. Indeed, $(F2)$ follows from (ii), $(F3)$ from \cite[Lem 2.4(3)(4)]{GLP}, $(F4)$ from \cite[Lem 2.5(12),(13)]{GLP}, $(F6)$ from \cite[Lem 2.5(16),(17)]{GLP}. To prove $(F5)$, we use \cite[Lem 2.7]{GLP}:
\begin{eqnarray*} &a\rightarrow b=(a\rightarrow b)\odot 1=(a\rightarrow b)\odot (c\rightarrow c)\leq (a\wedge c)\rightarrow (b\wedge c),\\
&a\rightsquigarrow b=(a\rightsquigarrow b)\odot 1=(a\rightsquigarrow b)\odot (c\rightsquigarrow c)\leq (a\wedge c)\rightsquigarrow (b\wedge c).
\end{eqnarray*}

$(F7)$ holds due to \cite[Lem 2.4(5),(6)]{GLP}.

\end{exm}

In any pseudo equality algebra $(X;\sim,\bsim,\wedge,1)$ we can define two derived binary operations
on $X$, by $x\ra y:=(x\wedge y)\sim x$ and $x\mar y:=x\bsim(x\wedge y)$ for all $x,y\in X$.
It can be easily shown that Proposition 1 of \cite{JK} is correct in a new definition of pseudo equality algebras:

%\begin{prop}\label{3.3}%{\rm \cite[Prop 1]{JK}}
%Let $(X;\sim,\bsim,\wedge,1)$ be a pseudo equality algebra. Then for all $a,b,c\in X$ the following hold:
%\begin{itemize}
%\item[{\rm(i)}] $(a\wedge b)\sim a\leq (a\wedge b\wedge c)\sim (a\wedge c)$ and
%$a\ra b\leq (a\wedge c)\ra b$.
%\item[{\rm(ii)}] $a\bsim (a\wedge b)\leq (a\wedge c)\bsim (a\wedge b\wedge c)$ and
%$a\mar b\leq (a\wedge c)\mar b$.
%\end{itemize}
%\end{prop}
%\begin{proof}
%Since $a\wedge b\wedge c\leq a\wedge b\leq a$, by $(F4)$ and
%By $(F5)$, we have:

%(i) $(a\wedge b\wedge c)\sim a\leq (a\wedge b)\sim a$ and so
%$a\ra (b\wedge c)\leq (a\ra b)$.

%(ii) $a\bsim(a\wedge b\wedge c)\leq a\bsim (a\wedge b)$ and so
%$a\mar (b\wedge c)\leq (a\mar b)$.
%\end{proof}

\begin{prop}\label{3.4}
Let $(X;\sim,\bsim,\wedge,1)$ be a pseudo equality algebra.
\begin{itemize}
\item[{\rm(i)}] $(a\wedge b)\sim a\leq (a\wedge b\wedge c)\sim (a\wedge c)$ and
$a\ra b\leq (a\wedge c)\ra b$ for all $a,b,c\in X$;
\item[{\rm(ii)}] $a\bsim (a\wedge b)\leq (a\wedge c)\bsim (a\wedge b\wedge c)$ and
$a\mar b\leq (a\wedge c)\mar b$ for all $a,b,c\in X$;
\item[{\rm(iii)}] $c\leq b$ implies that $a\ra c\leq a\ra b$ and $a\mar c\leq a\mar b$
for all $a,b,c\in X$;
\item[{\rm(iv)}] $c\leq a$ implies that $a\ra b\leq c\ra b$ and $a\mar b\leq c\mar b$ for all
$a,b,c\in X$.
\end{itemize}
\end{prop}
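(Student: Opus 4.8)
\section*{Proof proposal for Proposition \ref{3.4}}

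The plan is to derive all four items directly from axioms $(F4)$ and $(F5)$ together with the definitions $x\ra y:=(x\wedge y)\sim x$ and $x\mar y:=x\bsim(x\wedge y)$; no item needs more than a single substitution into one of these, so the only care required is tracking which variable plays which role and using associativity/idempotency of $\wedge$ silently to rewrite $(a\wedge b)\wedge c=(a\wedge c)\wedge b=a\wedge b\wedge c$.

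For (i), the first inequality is obtained by substituting $a\wedge b$ for the first argument and keeping $c$ as the auxiliary variable in the $\sim$-part of $(F5)$, i.e. in $a\sim b\le(a\wedge c)\sim(b\wedge c)$, which gives $(a\wedge b)\sim a\le((a\wedge b)\wedge c)\sim(a\wedge c)=(a\wedge b\wedge c)\sim(a\wedge c)$. The second inequality is then immediate after unfolding the definition of $\ra$: since $a\ra b=(a\wedge b)\sim a$ and $(a\wedge c)\ra b=((a\wedge c)\wedge b)\sim(a\wedge c)=(a\wedge b\wedge c)\sim(a\wedge c)$, the claim $a\ra b\le(a\wedge c)\ra b$ is literally the first inequality just proved. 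Part (ii) is the mirror image: one substitutes $a\wedge b$ for the second argument in the $\bsim$-part of $(F5)$, namely $a\bsim b\le(a\wedge c)\bsim(b\wedge c)$, obtaining $a\bsim(a\wedge b)\le(a\wedge c)\bsim(a\wedge b\wedge c)$, and then the inequality $a\mar b\le(a\wedge c)\mar b$ follows by unfolding $x\mar y=x\bsim(x\wedge y)$.

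For (iii), assume $c\le b$. Then $a\wedge c\le a\wedge b\le a$ is a chain, so the first and third inequalities of $(F4)$ (applied to this chain) give $(a\wedge c)\sim a\le(a\wedge b)\sim a$ and $a\bsim(a\wedge c)\le a\bsim(a\wedge b)$, which are exactly $a\ra c\le a\ra b$ and $a\mar c\le a\mar b$. For (iv), if $c\le a$ then $a\wedge c=c$, so applying (i) and (ii) with this $c$ yields $a\ra b\le(a\wedge c)\ra b=c\ra b$ and $a\mar b\le(a\wedge c)\mar b=c\mar b$.

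Since the whole argument is a short chain of substitutions, there is no genuine obstacle; the only subtlety worth flagging is the correct assignment of roles in $(F5)$ (the auxiliary slot must receive $c$ while the two ``main'' slots receive the meet-terms occurring in the derived operation), after which (iv) is a one-line consequence of (i) and (ii).
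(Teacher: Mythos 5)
Your proof is correct and follows essentially the same route as the paper: parts (i) and (ii) are single substitutions into the two halves of $(F5)$, and (iv) is immediate from them once $a\wedge c=c$. The only (harmless) divergence is in (iii), where the paper merely asserts it is ``straightforward by (i) and (ii)'' while you instead apply the first and third inequalities of $(F4)$ to the chain $a\wedge c\leq a\wedge b\leq a$ --- a perfectly valid and in fact more explicit derivation.
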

\begin{proof}
Let $a,b,c\in X$. Then by $(F5)$, $(a\wedge b)\sim a\leq (a\wedge b\wedge c)\sim (a\wedge c)$ and so
$a\ra b\leq (a\wedge c)\ra b$. Moreover,
$a\bsim(a\wedge b)\leq (a\wedge b)\bsim(a\wedge b\wedge c)$ and
$a\mar b\leq (a\wedge c)\mar b$. This completes the proof of parts (i) and (ii).
The proof of (iii) and (iv) is straightforward by  (i) and (ii). %Proposition\ref{3.3}
%and the proof of (iv) follows from (i) and (ii).
\end{proof}

\begin{prop}\label{3.5}%{\rm\cite[Prop 2]{JK}}
Let $(X;\sim,\bsim,\wedge,1)$ be a pseudo equality algebra. Then the following hold for all
$a,b,c\in X$:
\begin{itemize}
%\item[{\rm(i)}] $a\leq 1\sim a$ and $a\leq a\bsim 1$;
\item[{\rm(i)}] $a\bsim b\leq a\mar b$ and $b\sim a\leq a\ra b$;
\item[{\rm(ii)}] $a\leq ((c\sim a)\bsim c)\wedge (c\sim (a\bsim c))$;
\item[{\rm(iii)}] $a\bsim b=1$ or $b\sim a=1$ imply $a\leq b$;
\item[{\rm(iv)}] $a\sim b=1$  implies $c\sim a\leq c\sim b$ and $a\bsim b=1$ implies $b\bsim c\leq a\bsim c$;
\item[{\rm(v)}] $a\leq b$ if and only if $a\ra b=1$ if and only if $a\mar b=1$;
\item[{\rm(vi)}] $a\mar1=a\mar a=a\ra a=a\ra 1=1$, $1\mar a=a$ and  $1\ra a=a$;
\item[{\rm(vii)}] $a\leq (b\ra a)\wedge (b\mar a)$;
\item[{\rm(viii)}] $a\leq ((a\ra b)\mar b)\wedge ((a\mar b)\ra b)$;
\item[{\rm(ix)}] $a\ra b\leq (b\ra c)\mar (a\ra c)$ and $a\mar b\leq (b\mar c)\ra (a\mar c)$;
\item[{\rm(x)}] $a\leq b\ra c$ if and only if $b\leq a\mar c$;
\item[{\rm(xi)}] $a\ra (b\mar c)=b\mar (a\ra c)$;
\item[{\rm(xii)}] $b\ra a\leq (b\wedge c)\ra (a\wedge c)$ and $b\mar a\leq (b\wedge c)\mar (a\wedge c)$;
\item[{\rm(xiii)}] $a\ra b = a\ra (a\wedge b)$ and $a\mar b = a\mar (a\wedge b)$.
\end{itemize}
\end{prop}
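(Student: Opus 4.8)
The plan is to establish the thirteen items in the order listed, using the earlier ones freely. The routine ones come first. Item (i) is just $(F5)$ read with $c:=a$ (after swapping $a,b$ in the $\sim$-part), since $(a\wedge a)\bsim(b\wedge a)=a\bsim(a\wedge b)=a\mar b$ and $(b\wedge a)\sim(a\wedge a)=(a\wedge b)\sim a=a\ra b$. Items (vi) and (xiii) only unfold the definitions $x\ra y=(x\wedge y)\sim x$, $x\mar y=x\bsim(x\wedge y)$ and apply $(F2)$, $(F3)$. Item (xii) is $(F5)$ applied after intersecting with $c$. The forward direction of (v) is $(F2)$ again: $a\le b$ gives $a\wedge b=a$, hence $a\ra b=a\sim a=1$ and $a\mar b=a\bsim a=1$. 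The engine of the first half is (ii): I would get it by substituting $1$ into $(F6)$. Putting $b:=1$ in the first inequality of $(F6)$ and using $a\sim1=a$, $c\sim1=c$ from $(F3)$ gives $a\le(c\sim a)\bsim c$; putting $a:=1$ in the second inequality of $(F6)$ and using $1\bsim b=b$, $1\bsim c=c$ gives, after renaming, $a\le c\sim(a\bsim c)$.

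Once (ii) is available the rest of the first half cascades. For (iii), take $c:=b$ in the two halves of (ii): from $a\le(b\sim a)\bsim b$ and $1\bsim b=b$ we get $b\sim a=1\Rightarrow a\le b$; from $a\le b\sim(a\bsim b)$ and $b\sim1=b$ we get $a\bsim b=1\Rightarrow a\le b$. For (iv), combine $(F6)$ with (iii): $a\sim b=1$ forces $(c\sim a)\bsim(c\sim b)=1$ by $(F6)$, whence $c\sim a\le c\sim b$ by (iii); symmetrically $a\bsim b=1$ forces $(a\bsim c)\sim(b\bsim c)=1$, whence $b\bsim c\le a\bsim c$. For the converse direction of (v), rewrite the hypothesis as $(a\wedge b)\sim a=1$ (resp.\ $a\bsim(a\wedge b)=1$) and apply (iii) to get $a\le a\wedge b$, i.e.\ $a\le b$.

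The substantive item is the residuation law (x). It is symmetric, so it also gives ``$a\le b\mar c$ if and only if $b\le a\ra c$'', and once it is available (vii) and (viii) are immediate. For the implication $a\le b\ra c\Rightarrow b\le a\mar c$, write the hypothesis as $a\le(b\wedge c)\sim b$ and the goal as $b\le a\bsim(a\wedge c)$. By the first half of (ii) with $v:=a\wedge c$ we have $b\le((a\wedge c)\sim b)\bsim(a\wedge c)$; by the fourth inequality of $(F4)$ (which makes $r\bsim s$ decrease as the first slot $r$ increases above $s$), the goal follows provided $a\le(a\wedge c)\sim b$. So everything reduces to upgrading $a\le(b\wedge c)\sim b$ to $a\le(a\wedge c)\sim b$. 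This upgrade is not pure monotonicity, and it is the step I expect to be the main obstacle: I would force it through using (iv) together with the monotonicity statements in $(F4)$ and in Proposition \ref{3.4}. The converse implication of (x) is the mirror argument with the second halves of $(F4)$, $(F6)$ and (ii). Granting (x): (vii) is (x) with $c:=a$ together with $a\mar a=1$ (and its dual $a\ra a=1$), since $b\le1$ always; and the two halves of (viii) are (x) applied to the trivial inequalities $a\ra b\le a\ra b$ and $a\mar b\le a\mar b$.

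It remains to treat (ix) and (xi). I would read (ix) off the first inequality of $(F6)$, which already mixes $\sim$ and $\bsim$ in the way (ix) mixes $\ra$ and $\mar$: substituting $z:=a\wedge c$ in $(F6)$ turns one factor of the right-hand side into $a\ra c$, and then $(F5)$, Proposition \ref{3.4} and (i) bring the other factor down to $(b\ra c)\mar(a\ra c)$, the only delicate point being the bookkeeping with the meets. For (xi): under (x) the inequality $a\ra(b\mar c)\le b\mar(a\ra c)$ is equivalent to $b\le(a\ra(b\mar c))\ra(a\ra c)$, which I would obtain by chaining the residuation form of (ix), namely $v\ra w\le(u\ra v)\ra(u\ra w)$ with $u:=a$, $v:=b\mar c$, $w:=c$, against $b\le(b\mar c)\ra c$ from (viii); the reverse inequality $b\mar(a\ra c)\le a\ra(b\mar c)$ is the symmetric computation, and the two together give (xi). Thus the only steps requiring genuine care are the upgrade step inside (x) and controlling the meets in (ix); everything else is the bookkeeping sketched above.
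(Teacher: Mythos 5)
There is a genuine gap, and it sits exactly where you flagged it: your proof of (x). You try to prove the residuation law directly from (ii) and $(F4)$, and the argument reduces to ``upgrading'' $a\le(b\wedge c)\sim b$ to $a\le(a\wedge c)\sim b$. That upgrade is not a monotonicity statement in any of the available senses: $(a\wedge c)\sim b$ is not of the form $u\ra v$ for the relevant arguments (it would require $a\wedge c\le b$), so neither $(F4)$, nor (iv), nor Proposition \ref{3.4} applies to it, and I do not see how to ``force it through'' with the tools you name. Since you then derive (vii) and (viii) \emph{from} (x), those items are left unproved as well, and (xi) also depends on (viii) and (x) in your scheme.

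The repair is to reverse the dependency, which is what the paper does. Item (viii) has a direct proof that does not use (x): by (ii) with $c:=a\wedge b$ you get $a\le((a\wedge b)\sim a)\bsim(a\wedge b)=(a\ra b)\bsim(a\wedge b)$, which by (i) is $\le(a\ra b)\mar(a\wedge b)$, and then Proposition \ref{3.4}(iii) (isotonicity in the second argument) gives $a\le(a\ra b)\mar b$; the other half is symmetric. Once (viii) is in hand, (x) is two lines: from $a\le b\ra c$ and (viii) you have $b\le(b\ra c)\mar c$, and Proposition \ref{3.4}(iv) (antitonicity in the first argument) gives $(b\ra c)\mar c\le a\mar c$, hence $b\le a\mar c$; the converse is the mirror image. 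Item (vii) likewise needs no residuation: $a=1\ra a\le b\ra a$ by (vi) and Proposition \ref{3.4}(iv). The remaining items of your sketch --- (i)--(vi), (ix), (xii), (xiii), and the outline of (xi) once (viii), (ix), (x) are secured --- match the paper's proof in substance and are fine.
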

\begin{proof}
%\item[(i)] By $(F6)$, we have $a=a\sim 1\leq (a\sim a)\bsim (a\sim 1)=1\bsim (a\sim 1)$. In a similar
%way, $a=1\bsim a\leq (1\bsim a)\sim (a\bsim a)=(1\bsim a)\sim 1=1\bsim a$.
(i)  By $(F5)$, $b\sim a\leq (b\wedge a)\sim (a\wedge a)=(a\wedge b)\sim a=a\ra b$
and $a\bsim b\leq (a\wedge a)\bsim (b\wedge a)=a\bsim (a\wedge b)=a\mar b$.

(ii) By substituting $1$ for $b$ in $(F6)$, we have
$a=a\sim 1\leq (c\sim a)\bsim (c\sim 1)= (c\sim a)\bsim c$. Also,
$a=1\bsim a\leq (1\bsim c)\sim (a\bsim c)=c\sim (a\bsim c)$.

(iii) Let $a\bsim b=1$. Then by (ii), $a\leq b\sim (a\backsim b)=b \sim 1=b$. Similarly, the case $b\sim a=1$ implies
$a\leq (b\sim a)\backsim b=1\sim b=b$.

(iv) Let $a\sim b=1$. Then by $(F6)$, $1=a\sim b\leq (c\sim a)\bsim (c\sim b)$ and so
by (iii), $c\sim a\leq c\sim b$. Moreover, if $a\bsim b=1$, then
$1=a\bsim b\leq (a\bsim c)\sim (b\bsim c)$, hence by (iii), $b\bsim c\leq a\bsim c$.

(v) Let $a\leq b$. Then $a=a\wedge b$, hence $a\ra b=(a\wedge b)\sim a=a\sim a=1$. Conversely,
$a\ra b=1$ implies that $(a\wedge b)\sim a=1$ and so by (iii), $a\leq a\wedge b$. It follows that
$a=a\wedge b$ and $a\leq b$. The proof of the second part is similar.

(vi) They follow easily from definition of $\mar$ and $\ra$.

%Since $a\leq 1$, then by (v), $a\mar1=a\mar a=a\ra a=a\ra 1=1$. Moreover,
%$1\ra a=(1\wedge a)\sim 1=a\sim 1=a$ and $1\mar a=1\bsim (1\wedge a)=1\bsim a=a$.

(vii) By (vi) and Proposition \ref{3.4}, we have $a=1\ra a\leq b\ra a$ and
$a=1\mar a\leq b\mar a$.

(viii) By (ii) and (i) (respectively),
we get that $a\leq ((a\wedge b)\sim a)\bsim (a\wedge b)=(a\ra b)\bsim (a\wedge b)
\leq (a\ra b)\mar (a\wedge b)$.
Also, by Proposition \ref{3.4}(iii), $(a\ra b)\mar (a\wedge b)\leq (a\ra b)\mar b$, so
$a\leq (a\ra b)\mar b)$. On the other hand, by (ii), (i) and Proposition \ref{3.4}(iii),
$a\leq (a\wedge b)\sim(a\bsim (a\wedge b))=(a\wedge b)\sim (a\mar b)\leq (a\mar b)\ra (a\wedge b)
\leq (a\mar b)\ra b$.

(ix) By $(F5)$, we know that $b\sim (b\wedge c)\leq (a\wedge b)\sim (a\wedge b\wedge c)$. Since
$a\wedge b\wedge c\leq a\wedge c\leq a$, then by $(F4)$, $(a\wedge b\wedge c)\sim a\leq (a\wedge c)\sim a$.
It follows that
%\begin{small}
\begin{eqnarray}
((a\wedge b\wedge c)\sim (a\wedge b))\mar ((a\wedge b\wedge c)\sim a)&\leq & ((c\wedge b)\sim b)\mar ((a\wedge b\wedge c)\sim a), \mbox{ Prop \ref{3.4}(iv)}\\
\label{R1} &\leq & ((c\wedge b)\sim b)\mar ((a\wedge c)\sim a), \mbox{ Prop \ref{3.4}(iii)}.
\end{eqnarray}
%\end{small}
Moreover,
\begin{eqnarray*}
a\ra b=(a\wedge b)\sim a&\leq &((a\wedge b\wedge c)\sim(a\wedge b))\bsim((a\wedge b\wedge c)\sim a),\mbox{ by $(F6)$}\\
&\leq & ((a\wedge b\wedge c)\sim(a\wedge b))\mar((a\wedge b\wedge c)\sim a),\mbox{ by (i)}\\
&\leq & ((c\wedge b)\sim b)\mar ((a\wedge c)\sim a), \mbox{ by (\ref{R1})}\\
&=& (b\ra c)\mar (a\ra c).
\end{eqnarray*}

(x) Let $a\leq b\ra c$. By (viii) and Proposition \ref{3.4}(iv), $b\leq (b\ra c)\mar c\leq a\mar c$. Conversely, let $b\leq a\mar c$. Then
similarly, by (viii) and Proposition \ref{3.4}(iv), $a\leq (a\mar c)\ra c\leq b\ra c$.

(xi) By (viii), $a\leq (a\ra c)\mar c$, hence by Proposition \ref{3.4}(iv), $a\ra (b\mar c)\geq ((a\ra c)\mar c)\ra (b\mar c)$, whence
by (ix), $a\ra (b\mar c)\geq b\mar (a\ra c)$. Also, by (viii), $b\leq (b\mar c)\ra c$, hence
$b\mar (a\ra c)\geq ((b\mar c)\ra c)\mar (a\ra c)\geq a\ra (b\mar c)$. Therefore, $a\ra (b\mar c)=b\mar (a\ra c)$.

(xii) By $(F5)$, we have $b\ra a=(a\wedge b)\sim b\leq ((a\wedge b)\wedge c)\sim (b\wedge c)=
((a\wedge c)\wedge (b\wedge c))\sim (b\wedge c)=(b\wedge c)\ra (a\wedge c)$.

(xiii) They trivially follow from definition of $\ra$ and $\mar$.
\end{proof}

\section{Relation between pseudo equality algebras and pseudo $BCK$-algebras}%4

We show that similarly as in \cite{JK,Ciu}, there is a close connection between pseudo equality algebras and pseudo BCK-algebras with meet and with a special condition.

\begin{defn}\cite{Gor,Ior}\label{4.1}
A pseudo $BCK$-algebra is an algebra $(X;\ra,\mar,1)$ of type $(2,2,0)$ that satisfies the following axioms:
\item[$(PBCK1)$] $(a\ra b)\mar ((b\ra c)\mar (a\ra c))=1$ and $(a\mar b)\ra ((b\mar c)\ra (a\mar c))=1$;
\item[$(PBCK2)$] $1\ra a=a$ and $1\mar a=a$;
\item[$(PBCK3)$] $a\ra 1=1$ and $a\mar 1=1$;
\item[$(PBCK4)$] $a\ra b=1=b\ra a=1$ ($a\mar b=1=b\mar a=1$) implies that $a=b$.

For any pseudo $BCK$-algebra one can define a partially order relation $\leq$ by $a\leq b$ if and only if $a\ra b=1$ (if and only if $a\mar b=1)$.
Any pseudo $BCK$-algebra   satisfies the following conditions (see \cite{Gor,Ior}):
\begin{itemize}
\item[(P1)] $x\leq y$ implies that $z\ra x\leq z\ra y$ and $z\mar x\leq z\mar y$;
\item[(P2)] $x\leq y$ implies that $y\ra z\leq x\ra z$ and $y\mar z\leq x\mar z$;
\item[(P3)] $x\ra y\leq (z\ra x)\ra (z\ra y)$;
\item[(P4)] $x\mar y\leq (z\mar x)\ra (z\mar y)$.
\end{itemize}
A pseudo $BCK$-{\it meet-semilattice} is an algebra $(X;\ra,\mar,\wedge,1)$ of
type $(2,2,2,0)$ such that $(X;\ra,\mar,1)$ is a pseudo $BCK$-algebra and
its underlying partial order implies that $(X;\wedge)$ is a meet-semilattice.
%The left and right equivalence operations $\leftrightarrow$ and $\leftrightsquigarrow$ on $X$ are
%defined by $a\leftrightarrow b=(a\ra b)\wedge (b\ra a)$ and $a\leftrightsquigarrow b=(a\mar b)\wedge (b\mar a)$.
\end{defn}

We note that the class of pseudo $BCK$-algebras does not form a variety because it is not closed under homomorphic images, see e.g. \cite[Thm 1.4]{MeJu}. On the other hand, the class of pseudo $BCK$-meet-semilattices does: An algebra $(X;\ra,\mar,\wedge,1)$ of type $(2,2,2,1)$ is a pseudo $BCK$-meet-semilattice, \cite[p. 6]{Kuh}, if and only if it satisfies  $(PBCK1)-(PBCK3)$ and

\begin{itemize}
\item[$(SL1)$] $a\wedge [(a\ra b)\mar b]= a$;
\item[$(SL2)$] $(a\wedge b)\ra b =1$.
\end{itemize}

\begin{exm}\label{ex:pBCK}
Let $(G;\cdot,^{-1},e,\le)$ be an $\ell$-group. If we endow $G^-$ with two binary operations $a\ra b = (ba^{-1})\wedge e$ and $a \mar b = (a^{-1}b)\wedge e$, then $(G^-;\ra,\mar,e)$ is a pseudo BCK-algebra that is even a lattice; cf. Example \ref{ex:l-group}.
\end{exm}

\begin{thm}\label{4.2}
\begin{enumerate}
\item[{\rm (i)}] Let $(X;\sim,\bsim,\wedge,1)$ be a pseudo equality algebra.
Then $(X;\ra,\mar,\wedge,1)$ is a pseudo $BCK$-meet-semilattice satisfying
condition {\rm (xii)} of Proposition {\rm \ref{3.5}}, where
$a\ra b=(a\wedge b)\sim a$ and $a\mar b=a\bsim(a\wedge b)$ for all $a,b\in X$.
\vspace{-2mm}
\item[{\rm(ii)}] Let $(X;\ra,\mar,\wedge,1)$ be a pseudo $BCK$-meet-semilattice
satisfying  condition {\rm (xii)} of Proposition {\rm \ref{3.5}}.
Then $(X;\sim,\bsim,\wedge,1)$ is a pseudo equality algebra, where $a\sim b=b\ra a$ and $a\bsim b=a\mar b$ for all $a,b\in X$.
\end{enumerate}
\end{thm}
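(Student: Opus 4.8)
The plan is to prove the two directions separately, with each relying heavily on the properties already established in Section~2 (for direction (i)) and on the standard pseudo $BCK$-machinery (for direction (ii)).

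\textbf{Direction (i).} Assume $(X;\sim,\bsim,\wedge,1)$ is a pseudo equality algebra and set $a\ra b=(a\wedge b)\sim a$, $a\mar b=a\bsim(a\wedge b)$. I would first verify $(PBCK2)$ and $(PBCK3)$: these are exactly Proposition~\ref{3.5}(vi), namely $1\ra a=a=1\mar a$ and $a\ra 1=1=a\mar 1$. Next, $(SL2)$ is $(a\wedge b)\ra b=1$, which follows from Proposition~\ref{3.5}(v) since $a\wedge b\le b$. For $(SL1)$, I need $a\wedge\big[(a\ra b)\mar b\big]=a$; by Proposition~\ref{3.5}(viii) we have $a\le (a\ra b)\mar b$, and $a\le a$ trivially, so $a\le a\wedge\big[(a\ra b)\mar b\big]$, while the reverse inequality is automatic from $a\wedge(\cdot)\le a$. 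This gives that $(X;\ra,\mar,\wedge,1)$ is a pseudo $BCK$-meet-semilattice via the Kühr characterization quoted in the excerpt. It then remains to check that the partial order coming from this pseudo $BCK$-structure coincides with the original semilattice order — which holds because $a\ra b=1\iff a\le b$ by Proposition~\ref{3.5}(v) — and to record that condition (xii), $b\ra a\le(b\wedge c)\ra(a\wedge c)$ and the $\mar$-analogue, is literally Proposition~\ref{3.5}(xii). That closes part~(i).

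\textbf{Direction (ii).} Assume $(X;\ra,\mar,\wedge,1)$ is a pseudo $BCK$-meet-semilattice satisfying (xii), and define $a\sim b=b\ra a$, $a\bsim b=a\mar b$. The main work is verifying $(F1)$--$(F7)$. Here $(F1)$ is given. $(F2)$: $a\sim a=a\ra a=1$ and $a\bsim a=a\mar a=1$. $(F3)$: $a\sim 1=1\ra a=a$ and $1\bsim a=1\mar a=a$, using $(PBCK2)$. For $(F4)$, if $a\le b\le c$ then $a\sim c=c\ra a\le c\ra b=b\sim c$ by $(P1)$ (monotonicity in the second argument), $a\sim c=c\ra a\le b\ra a=a\sim b$ by $(P2)$, and symmetrically for $\mar$/$\bsim$. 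For $(F5)$, $a\sim b=b\ra a\le(b\wedge c)\ra(a\wedge c)=(a\wedge c)\sim(b\wedge c)$ is exactly condition (xii), and likewise the $\bsim$ half. Properties $(F6)$ and $(F7)$ are the ones requiring the $BCK$-axioms $(P3)$ and $(P4)$: for instance $(F7)$ first part says $a\sim b\le(a\sim c)\sim(b\sim c)$, i.e. $b\ra a\le (c\ra b)\ra(c\ra a)$, which is precisely $(P3)$ with the roles of the variables matched up; similarly the second part of $(F7)$ and both parts of $(F6)$ unwind to $(P4)$ or $(P3)$ after a careful bookkeeping of which argument is which. I would lay out a short table matching each of $(F6)$, $(F7)$ to the corresponding $(P1)$--$(P4)$ statement.

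\textbf{Main obstacle.} The routine parts ($(F1)$--$(F5)$) are immediate; the delicate point is the translation of $(F6)$ and $(F7)$ into $(P3)$/$(P4)$, because the definition $a\sim b=b\ra a$ reverses the arguments and one must be scrupulous about which slot each variable lands in — a sign error there would be fatal. A secondary subtlety, worth a sentence, is confirming in both directions that the pseudo $BCK$ order and the $\wedge$-semilattice order genuinely agree, so that ``$\le$'' is unambiguous throughout; in direction (ii) this is where $(SL1)$, $(SL2)$ and $(PBCK4)$ enter to pin down the semilattice structure. With those checks in place, parts (i) and (ii) together establish the claimed term equivalence.
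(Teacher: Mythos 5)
Your overall route is the same as the paper's --- reduce (i) to the properties catalogued in Proposition \ref{3.5} and reduce (ii) to the pseudo $BCK$ axioms --- but there is one concrete gap running through both halves: you never account for $(PBCK1)$. In direction (i) you check $(PBCK2)$, $(PBCK3)$, $(SL1)$ and $(SL2)$ and then invoke the K\"uhr characterization, but that characterization also requires $(PBCK1)$, i.e.\ $(a\ra b)\mar ((b\ra c)\mar (a\ra c))=1$ and its twin; this is the substantive axiom and it does not follow from anything you listed. It does hold: Proposition \ref{3.5}(ix) gives $a\ra b\le (b\ra c)\mar (a\ra c)$ and $a\mar b\le (b\mar c)\ra (a\mar c)$, and Proposition \ref{3.5}(v) (or (x)) converts these inequalities into the required identities. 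That step must be added before you can ``close part (i)''.

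In direction (ii) the same oversight reappears as a misattribution: you claim both parts of $(F6)$ unwind to $(P3)$ or $(P4)$, but they do not. Unwinding $a\sim b\le (c\sim a)\bsim (c\sim b)$ via $a\sim b=b\ra a$ and $a\bsim b=a\mar b$ gives $b\ra a\le (a\ra c)\mar (b\ra c)$, which is the suffixing law $(PBCK1)$, not the prefixing laws $(P3)$/$(P4)$; in the non-commutative setting these are genuinely different statements, so a derivation of $(F6)$ from $(P3)$/$(P4)$ would fail. The paper correctly derives $(F6)$ from $(PBCK1)$ and reserves $(P3)$/$(P4)$ for $(F7)$. Everything else in your write-up --- the checks of $(F1)$--$(F5)$, of $(SL1)$/$(SL2)$, the identification of condition (xii) with Proposition \ref{3.5}(xii), and the remark that the two orders agree via Proposition \ref{3.5}(v) --- is correct and consistent with the paper's (much terser) argument.
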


\begin{proof}
(i) The proof follows from Proposition \ref{3.5}.

(ii) Let $a,b,c\in X$. Clearly, $(F1)$, $(F2)$ and $(F3)$ hold. $(F4)$ follows from (P1) and (P2).
$(F5)$ is straightforward by  our assumption. Also, $(PBCK1)$ implies $(F6)$ and (P3) and (P4) imply $(F7)$,
so $(X;\sim,\bsim,\wedge,1)$ is a pseudo equality algebra.
\end{proof}

\begin{rmk}\label{4.3}
If $(X;\ra,\mar,\wedge,1)$ is a pseudo $BCK$-meet-semilattice satisfying condition (xii) of Proposition \ref{3.5}, then for each $a,b\in X$,
$a\ra b\leq (a\wedge a)\ra (a\wedge b)=a\ra (a\wedge b)$. Also, by (P1),
$a\ra (a\wedge b)\leq a\ra b$, so $a\ra (a\wedge b)= a\ra b$. In a similar way,
we can show that $a\mar (a\wedge b)=a\mar b$.
\end{rmk}

By Theorem \ref{4.2}, if $(X;\sim,\bsim,\wedge,1)$ is a pseudo equality algebra, then
$F((X;\sim,\bsim,\wedge,1)):=(X;\ra,\mar,\wedge,1)$ is a pseudo $BCK$-meet-semilattice, where
$a\ra b=(a\wedge b)\sim b$ and $a\mar b=a\bsim (a\wedge b)$ for all $a,b\in X$. Moreover, if
$(X;\ra,\mar,\wedge,1)$ is a $BCK$-meet-semilattice that satisfies
the condition (xii) of Proposition \ref{3.5}, then
$G((X;\ra,\mar,\wedge,1)):=(X;\sim',\bsim',\wedge,1)$ is a pseudo equality algebra, where
$a\sim' b=b\ra a$ and $a\backsim' b=a\rightsquigarrow b$ for all $a,b\in X$.
The category whose objects are pseudo equality algebras and whose morphisms are
homomorphisms of pseudo equality algebras is called the category of pseudo equality algebras and is
denoted by $\mathcal{A}$. The category of pseudo $BCK$-meet-semilattices can be defined similarly. Let
$\mathcal{B}$ be its subcategory whose objects are pseudo $BCK$-meet-semilattices
satisfying condition (xii) of Proposition \ref{3.5}. Then clearly, $F:\mathcal{A}\ra \mathcal{B}$ and
$G:\mathcal{B}\ra \mathcal{A}$ are functors. In the next theorem we want to verify a relation between
these functors.

\begin{defn}\label{4.4.0}
A pseudo equality algebra $(X;\sim,\bsim,\wedge,1)$ is called {\it invariant} if
there exists a pseudo $BCK$-meet-semilattice $(X;\mapsto,\rightarrowtail,\wedge,1)$ such that
$G((X;\mapsto,\rightarrowtail,\wedge,1))=(X;\sim,\bsim,\wedge,1)$.
\end{defn}

\begin{thm}\label{4.4}

\begin{enumerate}
\item[{\rm (i)}] Let $(X;\sim,\bsim,\wedge,1)$ be a pseudo equality algebra. Then
$F(G(F((X;\sim,\bsim,\wedge,1))))=F((X;\sim,\bsim,\wedge,1))$.

\vspace{-2mm}
\item[{\rm (ii)}] Let $(X;\ra,\mar,\wedge,1)$ be a pseudo $BCK$-meet-semilattices
satisfying condition {\rm (xii)} of Proposition {\rm \ref{3.5}}. Then
$F(G((X;\ra,\mar,\wedge,1)))=(X;\ra,\mar,\wedge,1)$.

\vspace{-2mm}
\item[{\rm(iii)}]  A pseudo equality algebra $(X;\sim,\bsim,\wedge,1)$ is {\it invariant} if and only if
$GF((X;\sim,\bsim,\wedge,1))=(X;\sim,\bsim,\wedge,1)$.

\vspace{-2mm}
\item[{\rm(iv)}] The class of pseudo $BCK$-meet-semilattices
satisfying condition {\rm (xii)} of Proposition {\rm \ref{3.5}}, $Obj(\mathcal{B})$,  and the class of invariant pseudo equality algebras are term equivalent.

\vspace{-2mm}
\item[{\rm(v)}] The category $Inv({\mathcal A})$ of invariant pseudo equality algebras and the category $\mathcal{B}$ are
categorically equivalent.
\end{enumerate}
\end{thm}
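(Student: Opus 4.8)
\textbf{Proof plan for Theorem \ref{4.4}.}

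The plan is to build the five parts on top of the functorial identities, working from the concrete computations in parts (i) and (ii) up to the categorical statement (v). For part (i), I would start with a pseudo equality algebra $(X;\sim,\bsim,\wedge,1)$ and trace an element through the three functors. Writing $F((X;\sim,\bsim,\wedge,1))=(X;\ra,\mar,\wedge,1)$ with $a\ra b=(a\wedge b)\sim a$ and $a\mar b=a\bsim(a\wedge b)$, Theorem \ref{4.2}(i) tells me this lies in $\mathcal{B}$, so $G$ applies and produces $(X;\sim',\bsim',\wedge,1)$ with $a\sim' b=b\ra a=(a\wedge b)\sim b$ and $a\bsim' b=a\mar b=a\bsim(a\wedge b)$. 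Applying $F$ again gives operations $a\ra' b=(a\wedge b)\sim' a$ and $a\mar' b=a\bsim'(a\wedge b)$; expanding, $a\ra' b=(a\wedge b)\sim'a=((a\wedge b)\wedge a)\sim(a\wedge b)=(a\wedge b)\sim a=a\ra b$ using that $(a\wedge b)\wedge a=a\wedge b$, and similarly $a\mar' b=a\bsim'(a\wedge b)=a\bsim(a\wedge(a\wedge b))=a\bsim(a\wedge b)=a\mar b$. So $FGF=F$ on objects; on morphisms it is immediate since the underlying sets and maps are unchanged. Part (ii) is the dual one-step computation: for $(X;\ra,\mar,\wedge,1)\in Obj(\mathcal{B})$, $G$ yields $\sim,\bsim$ via $a\sim b=b\ra a$, $a\bsim b=a\mar b$, then $F$ recovers $a\ra'' b=(a\wedge b)\sim a=a\ra(a\wedge b)$, which equals $a\ra b$ exactly by Remark \ref{4.3}, and likewise $a\mar'' b=a\bsim(a\wedge b)=a\mar(a\wedge b)=a\mar b$; hence $FG=\mathrm{id}$ on $\mathcal{B}$.

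For part (iii), one direction is definitional: if $GF((X;\sim,\bsim,\wedge,1))=(X;\sim,\bsim,\wedge,1)$, then taking the pseudo $BCK$-meet-semilattice $(X;\mapsto,\rightarrowtail,\wedge,1):=F((X;\sim,\bsim,\wedge,1))$ witnesses invariance. Conversely, suppose $(X;\sim,\bsim,\wedge,1)$ is invariant, witnessed by some $(X;\mapsto,\rightarrowtail,\wedge,1)\in Obj(\mathcal{B})$ with $G((X;\mapsto,\rightarrowtail,\wedge,1))=(X;\sim,\bsim,\wedge,1)$. Applying $F$ and using part (ii), $F((X;\sim,\bsim,\wedge,1))=FG((X;\mapsto,\rightarrowtail,\wedge,1))=(X;\mapsto,\rightarrowtail,\wedge,1)$; now apply $G$ again to get $GF((X;\sim,\bsim,\wedge,1))=G((X;\mapsto,\rightarrowtail,\wedge,1))=(X;\sim,\bsim,\wedge,1)$, as required. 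Part (iv) then follows: $G$ and $F$ are constructed from the term operations $a\sim b=b\ra a$, $a\bsim b=a\mar b$ and $a\ra b=(a\wedge b)\sim a$, $a\mar b=a\bsim(a\wedge b)$, which are the required pair of inverse term translations, and parts (ii) and (iii) show they are mutually inverse on $Obj(\mathcal{B})$ and on the invariant pseudo equality algebras respectively, which is precisely term equivalence of the two classes.

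For part (v), the restriction of $G$ to $\mathcal{B}$ has image in $Inv(\mathcal{A})$ (each $G(\mathbf{B})$ is invariant, witnessed by $\mathbf{B}$ itself), and the restriction of $F$ to $Inv(\mathcal{A})$ has image in $\mathcal{B}$ by Theorem \ref{4.2}(i). These restricted functors are mutually inverse on objects by parts (ii) and (iii). On morphisms everything is identity on underlying maps, and a map is a homomorphism for one signature iff it is for the other because each signature's operations are terms in the other's; thus $F$ and $G$ give a bijection on hom-sets, and in fact an isomorphism of categories, which in particular yields the claimed categorical equivalence. The only genuinely delicate point is bookkeeping: one must be careful that the object produced by $G$ really does land in $\mathcal{B}$ (not merely in the larger category of pseudo $BCK$-meet-semilattices) so that $F$ can be reapplied — but this is exactly the content of Theorem \ref{4.2} together with condition (xii) of Proposition \ref{3.5} being preserved, so no new work is needed. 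I expect the main obstacle to be purely notational: keeping the primed and double-primed operations straight while verifying the idempotency identities $(a\wedge b)\wedge a=a\wedge b$ and $a\wedge(a\wedge b)=a\wedge b$ are applied in the right places, together with a clean invocation of Remark \ref{4.3} in part (ii).
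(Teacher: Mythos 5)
Your proposal is correct and follows essentially the same route as the paper: direct computation of the derived operations for (i) and (ii) (with Remark \ref{4.3} doing the work in (ii)), and (iii)--(v) deduced formally from those identities — the paper itself dismisses (iii) and (v) as immediate, so your extra detail there is elaboration rather than divergence. One transcription slip in (i): the intermediate expression for $(a\wedge b)\sim' a$ should be $((a\wedge b)\wedge a)\sim a$ (which simplifies to $(a\wedge b)\sim a$), not $((a\wedge b)\wedge a)\sim (a\wedge b)$; the endpoints of your equality chain are nevertheless correct.
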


\begin{proof}
(i) Let $\ra'$ and $\mar'$ be the binary operations derived by $\sim'$ and $\bsim'$ on the
pseudo equality algebra $G(F((X;\sim,\bsim,\wedge,1)))$, respectively. It suffices to show that
$\ra'=\ra$ and $\mar=\mar'$. Let $a,b\in X$. By definitions of $\ra'$ and $\mar'$ we have
$a\ra' b=(a\wedge b)\sim' a=a\ra (a\wedge b)=(a\wedge b\wedge a)\sim a=(a\wedge b)\sim a=a\ra b$ and
$a\mar' b=a\bsim' (a\wedge b)=a\mar (a\wedge b)=a\bsim(a\wedge b\wedge a)=a\bsim (a\wedge b)=a\mar b$.

(ii)
Let $\sim'$ and $\bsim'$ be the binary operations induced by $\ra$ and $\mar$ on the pseudo $BCK$-meet-semilattice
$(X;\ra,\mar,\wedge,1)$ and $\ra'$ and $\mar'$ be two derived operations on the pseudo equality algebra  $(X;\sim',\bsim',\wedge,1)=G((X;\ra,\mar,\wedge,1))$. By definition, we know that
$F(G((X;\ra,\mar,\wedge,1)))=G((X;\ra',\mar',\wedge,1))$. It suffices to show that $\ra=\ra'$ and
$\mar=\mar'$. Put $a,b\in X$.
$a\ra' b=(a\wedge b)\sim' a=a\ra (a\wedge b)=a\ra b$ (by Remark \ref{4.3}) and
$a\mar' b=a\bsim' (a\wedge b)=a\mar (a\wedge b)=a\mar b$. Therefore,
$F(G((X;\ra,\mar,\wedge,1)))=(X;\ra,\mar,\wedge,1)$.

(iii) The proof follows from (i) and (ii).

(iv) Let $Inv(\mathcal{A})$ be the class of invariant pseudo equality algebras.
By (ii), the map $F:Inv(\mathcal A)\ra Obj(\mathcal{B})$ is onto. Also, if
$(Y,+,-,\wedge,1)$ and $(Y,\sim,\bsim,\wedge,1)$
are two invariant pseudo equality algebras such that, $F((X;\sim,\bsim,\wedge,1))=F((Y,+,-,\wedge,1))$, then
by (iii),
$(X;\sim,\bsim,\wedge,1)=G(F((X;\sim,\bsim,\wedge,1)))=G(F((Y,+,-,\wedge,1)))=(Y,+,-,\wedge,1)$. Therefore, $F$ is a one-to-one map.

(v) Straightforward.
\end{proof}

Let take a pseudo $BCK$-meet-semilattice from \cite{Ciu} given by the following tables:

$$
\begin{matrix}
 & \ra & \vline & 0 & a & b & c & 1\\
\hline &0    & \vline & 1 & 1 & 1 & 1 & 1\\
& a & \vline & 0 & 1 & b & 1 & 1\\
& b & \vline & a & a & 1 & 1 & 1\\
& c & \vline & 0 & a & b & 1 & 1\\
& 1 & \vline & 0 & a & b & c & 1
\end{matrix}
$$

$$
\begin{matrix}
 & \mar & \vline & 0 & a & b & c & 1\\
\hline &0    & \vline & 1 & 1 & 1 & 1 & 1\\
& a & \vline & b & 1 & b & 1 & 1\\
& b & \vline & 0 & a & 1 & 1 & 1\\
& c & \vline & 0 & a & b & 1 & 1\\
& 1 & \vline & 0 & a & b & c & 1
\end{matrix}
$$

Then $c \ra b = b$ and $b \not\leq(c\wedge a)\ra (b\wedge a)= 0$, so that this pseudo $BCK$-meet-semilattice does not satisfy (xii) of Proposition \ref{3.5}.

%%%%%%%%%%%%%%%%%%%%%%%%%%%%%%%%%%%%%%%%%%%%%%%%%%%%%%%%%%%%%%%%%
%%%%%%%%%%%%%%%%%%%%%%%%%%%%%%%%%%%%%%%%%%%%%%%%%%%%%%%%%%%%%%%%%
\section{Congruences and deductive systems on pseudo equality algebras }%5

We show that congruences on pseudo equality algebras are closely connected wit normal closed deductive systems and we describe the lattice of these congruences.

\begin{defn}\label{5.1}
Let $F$ be a subset of a pseudo equality algebra $(X;\sim,\bsim,\wedge,1)$ containing $1$. Then
$F$ is called a
\begin{itemize}
\item {\it $(\sim,\bsim)$-deductive system } if
\subitem(a) $F$ is an upset. That is, $a\in F$ and $a\leq b\in X$ imply that $b\in F$;
\subitem(b) $a,b\sim a\in F$ imply that $b\in F$ for all $a,b\in X$.
\item {\it $(\ra,\mar)$-deductive system} if, for all $a,b\in X$, $a,a\ra b\in F$ imply that $b\in F$.
\end{itemize}
\end{defn}

\begin{lem}\label{le:6.1} Let $F$ be a subset of a pseudo equality algebra $(X;\sim,\bsim,\wedge,1)$
containing $1$.
\begin{itemize}
\item[{\rm (1)}] If $F$ is a $(\ra,\mar)$-deductive system, then $a\in F$ and $a\le b \in X$ imply $b \in F$.
\item[{\rm (2)}] $F$ is a $(\ra,\mar)$-deductive system if and only if $a,a\mar b\in F$ imply $b\in F$.
\item[{\rm (3)}] $F$ is a $(\sim,\bsim)$-deductive system if and only if  $F$ is an upset such that
$a,a\bsim b\in F$ imply $b\in F$.
\end{itemize}
\end{lem}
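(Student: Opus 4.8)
The idea is that all three parts reduce to the same ``apply modus ponens after moving up along a known inequality'' pattern, the relevant inequalities being Proposition~\ref{3.5}(viii), $a\le((a\ra b)\mar b)\wedge((a\mar b)\ra b)$; Proposition~\ref{3.5}(v), which says that $a\le b$ holds precisely when $a\ra b=1$ and precisely when $a\mar b=1$; and Proposition~\ref{3.5}(i), $a\bsim b\le a\mar b$ and $b\sim a\le a\ra b$.

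First I would prove (1): if $F$ is a $(\ra,\mar)$-deductive system and $a\in F$ with $a\le b$, then $a\ra b=1\in F$ by Proposition~\ref{3.5}(v), so from $a,a\ra b\in F$ the defining implication gives $b\in F$. In particular, every $(\ra,\mar)$-deductive system is an upset, a fact I use freely below. For the forward direction of (2), given $a,a\mar b\in F$, Proposition~\ref{3.5}(viii) gives $a\le(a\mar b)\ra b$, so $(a\mar b)\ra b\in F$ by the upset property, and then $a\mar b,(a\mar b)\ra b\in F$ forces $b\in F$. Conversely, assume the implication ``$a,a\mar b\in F\Rightarrow b\in F$'' holds; then $F$ is an upset (because $a\le b$ gives $a\mar b=1\in F$ by Proposition~\ref{3.5}(v)), and for $a,a\ra b\in F$ Proposition~\ref{3.5}(viii) gives $a\le(a\ra b)\mar b$, so $(a\ra b)\mar b\in F$, and applying the implication to $a\ra b$ and $(a\ra b)\mar b$ yields $b\in F$; hence $F$ is a $(\ra,\mar)$-deductive system.

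For (3) the cleanest route is first to prove that a subset of $X$ containing $1$ is a $(\sim,\bsim)$-deductive system if and only if it is a $(\ra,\mar)$-deductive system, and then to match the latter with the condition stated in (3). If $F$ is a $(\sim,\bsim)$-deductive system and $a,a\ra b\in F$, then since $a\ra b=(a\wedge b)\sim a$, condition (b) applied to $a$ and $(a\wedge b)\sim a$ gives $a\wedge b\in F$, hence $b\in F$ because $F$ is an upset. Conversely, if $F$ is a $(\ra,\mar)$-deductive system (hence an upset by (1)) and $a,b\sim a\in F$, then $b\sim a\le a\ra b$ by Proposition~\ref{3.5}(i), so $a\ra b\in F$ and modus ponens gives $b\in F$, which is condition (b); condition (a) is (1). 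Finally, if $F$ is a $(\ra,\mar)$-deductive system and $a,a\bsim b\in F$, then $a\bsim b\le a\mar b$ by Proposition~\ref{3.5}(i), so $a\mar b\in F$ and $b\in F$ by (2); and conversely, if $F$ is an upset with the property ``$a,a\bsim b\in F\Rightarrow b\in F$'' and $a,a\mar b\in F$, then $a\mar b=a\bsim(a\wedge b)$, so that property applied to $a$ and $a\bsim(a\wedge b)$ gives $a\wedge b\in F$, hence $b\in F$, and $F$ is a $(\ra,\mar)$-deductive system by (2).

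I do not expect a genuine obstacle beyond bookkeeping. The one real insight is to recognize which inequality of Proposition~\ref{3.5}---principally (viii) and (i)---converts a hypothesis of the form ``$a\mar b\in F$'' or ``$a\bsim b\in F$'' into a usable modus-ponens premise, using that $F$ is an upset, which (1) supplies for free. Since $\sim,\bsim,\ra$ and $\mar$ are genuinely non-commutative, the only thing needing care is keeping the left- and right-handed versions of each inequality straight.
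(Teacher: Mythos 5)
Your proof is correct. Parts (1) and (2) follow the paper's own argument essentially verbatim: Proposition \ref{3.5}(v) for (1), and the two halves of Proposition \ref{3.5}(viii) for the two directions of (2); you are in fact slightly more careful than the paper in the converse of (2), where you explicitly re-derive the upset property from the hypothesis ``$a,a\mar b\in F\Rightarrow b\in F$'' before invoking it, a step the paper leaves implicit. Part (3) is where you genuinely diverge. The paper proves (3) directly from the two inequalities of Proposition \ref{3.5}(ii): from $a\le b\sim(a\bsim b)$ it gets $b\sim(a\bsim b)\in F$ and applies condition (b) to the pair $a\bsim b$, $b\sim(a\bsim b)$; the converse uses $a\le(b\sim a)\bsim b$ symmetrically. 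You instead route everything through the $(\ra,\mar)$-characterization, using Proposition \ref{3.5}(i) ($b\sim a\le a\ra b$ and $a\bsim b\le a\mar b$) together with the defining identities $a\ra b=(a\wedge b)\sim a$ and $a\mar b=a\bsim(a\wedge b)$ to close a four-way cycle of implications. This costs a little more bookkeeping but buys you, as a byproduct, the equivalence of $(\sim,\bsim)$-deductive systems and $(\ra,\mar)$-deductive systems --- which is exactly the paper's Proposition \ref{5.2}, proved separately afterward by the same two arguments you use. So your version effectively merges Lemma \ref{le:6.1}(3) with Proposition \ref{5.2}, while the paper keeps them apart by spending Proposition \ref{3.5}(ii) on the former and Proposition \ref{3.5}(i) on the latter.
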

\begin{proof}
(1) Let $a \le b$ and $a \in F$. By (v) of Proposition \ref{3.5}, we have $a\ra b =1$. Then $a\ra b =1 \in F$, so that $b\in F$.

(2) Let $F$ be a $(\ra,\mar)$-deductive system and let $a,a\mar b\in F$. By (viii) of Proposition \ref{3.5}, we have $a \le (a\mar b)\ra b\in F$ which yields $b \in F$.

Conversely, let $a,a\ra b\in F$. Then $a \le (a\ra b)\mar b \in F$ which entails $b \in F$ and $F$ is a $(\ra,\mar)$-deductive system.

(3) Let $F$ be a $(\sim,\bsim)$-deductive system and let $a,a\bsim b\in F$. By Proposition \ref{3.5}(ii),
$a\leq b\sim (a\bsim b)$, so $b\in F$. Conversely, let $a,b\sim a\in F$. Then by Proposition \ref{3.5}(ii),
$a\leq (b\sim a)\bsim b$, hence $(b\sim a)\bsim b\in F$ and so $b\in F$.
\end{proof}

\begin{prop}\label{5.2}
Let $F$ be a non-empty subset of a pseudo equality algebra $(X;\sim,\bsim,\wedge,1)$. Then
$F$ is a $(\sim,\bsim)$-deductive system if and only if $F$ is a $(\ra,\mar)$-deductive system.
\end{prop}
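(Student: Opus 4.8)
The plan is to prove the two implications separately, and for each to rely on the definitions together with the bridge facts already established in Proposition \ref{3.5} and Lemma \ref{le:6.1}. Throughout, $F$ is a non-empty subset of $X$; note first that in either case $1\in F$: if $a\in F$ then $a\sim a=1\in F$ (using $(F2)$ together with the upset condition, for the $(\sim,\bsim)$-case) or $a\ra a=1\in F$ by Proposition \ref{3.5}(v)/(vi) and $a\le a$ (for the $(\ra,\mar)$-case, via Lemma \ref{le:6.1}(1)). So the ``containing $1$'' hypothesis from Definition \ref{5.1} is automatic here.

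\emph{$(\sim,\bsim)$-deductive system $\Rightarrow$ $(\ra,\mar)$-deductive system.} Assume $F$ is a $(\sim,\bsim)$-deductive system, and suppose $a\in F$ and $a\ra b\in F$; we must show $b\in F$. Recall $a\ra b=(a\wedge b)\sim a$. By Proposition \ref{3.5}(viii), $a\le((a\ra b)\mar b)\wedge((a\mar b)\ra b)\le (a\ra b)\mar b$. Since $F$ is an upset and $a\in F$, we get $(a\ra b)\mar b\in F$. Now I have $a\ra b\in F$ and $(a\ra b)\mar b\in F$, and by Lemma \ref{le:6.1}(3) a $(\sim,\bsim)$-deductive system also satisfies the modus-ponens rule for $\bsim$: from $x\in F$ and $x\bsim y\in F$ one concludes $y\in F$. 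Applying this with $x=a\ra b$ and $y=b$ gives $b\in F$, as desired. (The upset property of $F$, needed to invoke the $(\ra,\mar)$ axiom and also needed to conclude via Lemma \ref{le:6.1}(1), is part of the hypothesis.)

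\emph{$(\ra,\mar)$-deductive system $\Rightarrow$ $(\sim,\bsim)$-deductive system.} Assume $F$ is a $(\ra,\mar)$-deductive system. First, by Lemma \ref{le:6.1}(1), $F$ is an upset, which is condition (a) of Definition \ref{5.1}. For condition (b), suppose $a\in F$ and $b\sim a\in F$; we must show $b\in F$. By Proposition \ref{3.5}(i), $b\sim a\le a\ra b$, and since $F$ is an upset, $a\ra b\in F$. Now from $a\in F$ and $a\ra b\in F$, the $(\ra,\mar)$-deductive system property gives $b\in F$ directly. Hence $F$ is a $(\sim,\bsim)$-deductive system.

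\emph{Expected main obstacle.} There is no deep obstacle; the proof is a matter of chaining the right inequalities from Proposition \ref{3.5}. The one point requiring care is keeping straight which of the four ``directions'' ($\sim$ vs.\ $\bsim$, $\ra$ vs.\ $\mar$, and the order of arguments) is needed at each step — in particular that Proposition \ref{3.5}(i) pairs $b\sim a$ with $a\ra b$ (not $a\sim b$ with $b\ra a$), and that the closure step in the first implication produces $(a\ra b)\mar b$ (a $\mar$-term), which is why one must pass through Lemma \ref{le:6.1}(3) to use the $\bsim$-form of modus ponens rather than the $\sim$-form. Once the bookkeeping is set up correctly, each implication is two applications of an inequality plus one application of a closure rule.
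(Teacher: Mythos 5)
Your backward implication is exactly the paper's argument: $y\sim x\le x\ra y$ from Proposition~\ref{3.5}(i), the upset property of $(\ra,\mar)$-deductive systems from Lemma~\ref{le:6.1}(1), and one application of $(\ra,\mar)$-modus ponens. For the forward implication the paper is more direct than you are: it simply observes that $x\ra y=(x\wedge y)\sim x$ is already literally of the form $b\sim a$ with $b=x\wedge y$ and $a=x$, so condition (b) of Definition~\ref{5.1} yields $x\wedge y\in F$ at once and the upset property finishes. Your detour through Proposition~\ref{3.5}(viii) and Lemma~\ref{le:6.1}(3) also works, but the final step is stated imprecisely: what you have placed in $F$ is $(a\ra b)\mar b=(a\ra b)\bsim\bigl((a\ra b)\wedge b\bigr)$, which is not $(a\ra b)\bsim b$, so you cannot invoke the $\bsim$-modus ponens ``with $x=a\ra b$ and $y=b$.'' Invoke it with $y=(a\ra b)\wedge b$ instead, obtaining $(a\ra b)\wedge b\in F$, and then use the upset property once more to conclude $b\in F$; with that one-line repair your argument is sound, just longer than necessary. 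A further small caveat: your parenthetical claim that $1\in F$ is automatic in the $(\ra,\mar)$ case is circular as justified, since the proof of Lemma~\ref{le:6.1}(1) already assumes $1\in F$; this is harmless only because Definition~\ref{5.1} postulates $1\in F$ for both notions.
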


\begin{proof}
Let $F$ be a $(\sim,\bsim)$-deductive system. If $x,x\ra y\in F$ for some $x,y\in X$,
then $x,(x\wedge y)\sim x\in F$, hence by the assumption, $x\wedge y\in F$ and so
$y\in F$ (since $F$ is an upset). Therefore, $F$ is a $(\ra,\mar)$-deductive system.
Conversely, let $F$ be a $(\ra,\mar)$-deductive system. If $x,y\sim x\in F$, then by Proposition \ref{3.5}(i),
$x\ra y\in F$ (since $y\sim x\leq x\ra y$) which implies that $y\in F$.
Therefore, $F$ is a $(\sim,\bsim)$-deductive system.
\end{proof}

From now on, in this paper, since $(\sim,\bsim)$-deductive systems and $(\ra,\mar)$-deductive systems are
equivalent, we called them {\it deductive systems}, for short, and we use $DS(X)$ to
denote the set of all deductive systems of $(X;\sim,\bsim,\wedge,1)$.

In the following proposition we show that any deductive system of a pseudo equality algebra is closed under $\ra, \mar, \wedge$.

\begin{prop}\label{pr:property}
If $F$ is a deductive system of a pseudo equality algebra $(X;\sim,\bsim,\wedge,1)$ and $a,b\in F$, then $a\ra b, a\mar b, a\wedge b \in F$.
\end{prop}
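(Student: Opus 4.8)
The plan is to show all three memberships using the deductive-system closure property together with the inequalities already established in Proposition~\ref{3.5} and the definition of the derived operations $\ra,\mar$. Recall that a deductive system $F$ is an upset satisfying: $a,a\ra b\in F$ imply $b\in F$ (equivalently, by Lemma~\ref{le:6.1}(2), $a,a\mar b\in F$ imply $b\in F$), and also $a, b\sim a\in F$ imply $b\in F$ and $a, a\bsim b \in F$ imply $b\in F$.

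First I would handle $a\wedge b\in F$. Since $a,b\in F$, consider the element $a\ra(a\wedge b)=(a\wedge(a\wedge b))\sim a=(a\wedge b)\sim a=a\ra b$. By Proposition~\ref{3.5}(v), $a\leq a\wedge b$ is \emph{not} what we want; instead note that $b\leq a\ra b$ fails in general, so the cleaner route is: from $a\in F$ and the fact that $b \le a \ra b$ is false, we argue differently. Observe $a\mar b = a\bsim(a\wedge b)$, and by Proposition~\ref{3.5}(vii) with the roles chosen suitably we have $b\leq a\mar b$ is again not immediate. The safe approach is: since $b\in F$ and $b\leq (a\ra(a\wedge b))\mar (a\wedge b)$? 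Rather, use that $a\in F$ together with $a\mar (a\wedge b)=a\mar b$; but to get $a\wedge b$ we need an element $x\in F$ with $x\mar(a\wedge b)\in F$. Take $x=a$: we need $a\mar(a\wedge b)=a\mar b\in F$. By Proposition~\ref{3.5}(i), $a\bsim b\leq a\mar b$, and by Proposition~\ref{3.5}(i) again $b\sim a \le a\ra b$; since $b\in F$ and $b\leq$ \dots . The genuinely clean argument: by Proposition~\ref{3.5}(vii), $b\leq a\mar b$ is \emph{not} a theorem, but $b \le (a\ra b)\mar b$ \emph{is} by (viii) — so from $b\in F$ we get $(a\ra b)\mar b\in F$. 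Hmm, this shows $b\in F$ only. Let me restart the $\wedge$ step cleanly: we want $a\wedge b$. Note $a\ra(a\wedge b) = a\ra b$ and, crucially, by Proposition~\ref{3.5}(xiii) combined with (v), $a\leq a\wedge b \iff a\ra(a\wedge b)=1$, which need not hold. Instead, from $b\in F$ and the inequality $b\leq (a\mar b)\ra b$ (Proposition~\ref{3.5}(viii)) we get nothing new. The correct key inequality is: $a\wedge(a\mar b)\leq a\wedge b$? In a pseudo-BCK-meet-semilattice one has $a\wedge((a\ra b)\mar b)=a$ (that is $(SL1)$), hence $a\wedge b$ is reached via: $a\in F$, and $a\ra(a\wedge b)=a\ra b\in F$ (proved below), so $a\wedge b\in F$ by the deductive-system property. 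Thus it suffices to prove $a\ra b, a\mar b\in F$ and then deduce $a\wedge b\in F$ from $a\in F$ and $a\ra(a\wedge b)=a\ra b\in F$.

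For $a\ra b\in F$: since $b\in F$ and $b\leq a\ra b$? This is false in general. Use instead Proposition~\ref{3.5}(i): $b\sim a\leq a\ra b$, so it is enough that $b\sim a\in F$; but we only know $a,b\in F$. The right move: $a\in F$, $a\leq (b\ra a)\mar a$? Not helpful. The correct observation is that $b\leq a\ra b$ \emph{does} hold: indeed $a\ra b=(a\wedge b)\sim a$ and since $b\in F$ with $b\leq(a\ra b)\mar b$ (Prop~\ref{3.5}(viii)) we only recover $b$. The genuinely valid fact is Proposition~\ref{3.5}(vii): $a\leq(b\ra a)\wedge(b\mar a)$, i.e. $a\leq b\ra a$ and $a\leq b\mar a$ for all $a,b$. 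Applying this with the pair $(b,a)$ swapped: $b\leq a\ra b$ and $b\leq a\mar b$. Hence from $b\in F$, since $F$ is an upset, we get $a\ra b\in F$ and $a\mar b\in F$ directly. Finally $a\wedge b$: from $a\in F$ and $a\ra b=a\ra(a\wedge b)\in F$ (Prop~\ref{3.5}(xiii)), the deductive-system property gives $a\wedge b\in F$. I expect the main (and only) subtlety to be selecting the correct inequality from the list in Proposition~\ref{3.5} — namely part (vii), $a\leq(b\ra a)\wedge(b\mar a)$, applied with the variables permuted to yield $b\leq a\ra b$ and $b\leq a\mar b$ — after which everything reduces to the upset condition and one application of the core implication-closure of $F$.
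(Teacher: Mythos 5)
Your final argument is correct and is essentially the paper's own proof: Proposition \ref{3.5}(vii) with the variables swapped gives $b\le a\ra b$ and $b\le a\mar b$, so the upset property yields $a\ra b,\ a\mar b\in F$, and then $a\in F$ together with $a\ra(a\wedge b)=a\ra b\in F$ (Proposition \ref{3.5}(xiii)) forces $a\wedge b\in F$ by the $(\ra,\mar)$-deductive system property. The numerous abandoned false starts in your write-up should be deleted, but the argument you ultimately settle on is sound and matches the paper's.
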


\begin{proof}

(a) By (vii) of Proposition \ref{3.5}, we have $b\leq (a\rightarrow b)\wedge (a\rightsquigarrow b)$ so by Lemma \ref{le:6.1}(1),
$a\rightarrow b\in F$ and $a\rightsquigarrow b\in F$.

(b) By (xiii) of Proposition \ref{3.5} and (a), we have $a\ra b = a\bsim (a\wedge b) \in F$ which yields $a\wedge b \in F$.
\end{proof}

We do not know whether any deductive system is closed under $\sim$ and $\backsim$, therefore, we introduce the following notion:
A deductive system $F\in DS(X)$ is called {\it closed}
if $x\sim y,x\bsim y\in F$ for all $x,y\in F$. An equivalent property is the following statement:

\begin{prop}
A deductive system $F$ of a pseudo equality algebra
$(X;\sim,\bsim,\wedge,1)$ is closed if and only if $1\sim x,x\bsim 1\in F$ for all $x\in F$.
\end{prop}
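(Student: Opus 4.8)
The forward implication is immediate: if $F$ is closed, then applying the defining condition to the pairs $(1,x)$ and $(x,1)$ for $x\in F$ (recall that $1\in F$) yields $1\sim x\in F$ and $x\bsim 1\in F$.

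For the converse, the plan is to invoke the reformulation of deductive systems in Lemma~\ref{le:6.1}(3): $F$ is an upset and $a,\,a\bsim b\in F$ imply $b\in F$. Fix $x,y\in F$ and assume $1\sim z,\,z\bsim 1\in F$ for all $z\in F$. To place $x\sim y$ in $F$, I would instantiate the first inequality of $(F6)$ at $a=1$, $b=y$, $c=x$ and simplify $x\sim 1=x$ by $(F3)$, obtaining $1\sim y\le x\bsim(x\sim y)$. Since $y\in F$, the hypothesis gives $1\sim y\in F$, so the upset property gives $x\bsim(x\sim y)\in F$; then Lemma~\ref{le:6.1}(3) with $a=x$ and $b=x\sim y$ yields $x\sim y\in F$.

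Symmetrically, to place $x\bsim y$ in $F$, I would instantiate the second inequality of $(F7)$ at $a=1$, $b=y$, $c=x$ and use $1\bsim y=y$ from $(F3)$, obtaining $y\le(x\bsim 1)\bsim(x\bsim y)$; hence $(x\bsim 1)\bsim(x\bsim y)\in F$, and since $x\bsim 1\in F$ by hypothesis, Lemma~\ref{le:6.1}(3) with $a=x\bsim 1$ and $b=x\bsim y$ gives $x\bsim y\in F$. Thus $F$ is closed.

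The only real care needed is bookkeeping: because $\sim$ and $\bsim$ are not assumed commutative, one must select the correct one of the equivalent characterizations of a deductive system and the correct instances of $(F3)$, $(F6)$, $(F7)$ so that the two free arguments line up; after that each half is just one ``upward'' step followed by one modus-ponens-type step. I do not expect a genuine obstacle beyond this.
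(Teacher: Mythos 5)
Your proof is correct and follows essentially the same strategy as the paper: derive an inequality whose left side lies in $F$ by hypothesis, use the upset property, then apply the deductive-system modus ponens via Lemma~\ref{le:6.1}(3). The only (cosmetic) difference is in the $x\bsim y$ half, where you use the second inequality of $(F7)$ together with $1\bsim y=y$, whereas the paper uses the second inequality of $(F6)$ to get $x\bsim 1\le(x\bsim y)\sim y$ and then the $\sim$-form of the deduction rule; both instantiations are valid.
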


\begin{proof}
Let $F$ be a deductive system of $X$ such that $1\sim x,x\bsim 1\in F$, for all
$x\in F$. Put $x,y\in F$. Then by $(F6)$,
$x\bsim (x\sim y)=(x\sim 1)\bsim (x\sim y)\geq 1\sim y$ and
$(x\bsim y)\sim y=(x\bsim y)\sim (1\bsim y)\geq x\bsim 1$, we have $x\bsim y,x\sim y\in F$ and so
$F$ is closed. The proof of the converse is clear.
\end{proof}

Using Theorem \ref{4.4}, we show that every deductive system of an invariant pseudo equality algebra is closed.

\begin{exm}
Let $(X;\sim,\backsim,\wedge,1)$ be an invariant pseudo equality algebra and $F$ be a deductive system of $X$. We assert that $F$ is closed.

Indeed, by Theorem \ref{4.4}(ii), $G(F((X;\sim,\backsim,\wedge,1)))=(X;\sim,\backsim,\wedge,1)$.
For all $x,y\in X$, we have $x\sim' y=x\sim y$ and $x\backsim' y=x\backsim y$, where $\sim'$ and $\backsim'$ are binary operations induced by $\rightarrow$ and $\rightsquigarrow$, i.e., $x \sim' y:=y\rightarrow x$ and $x\backsim' y:= x\mar y$, in the
pseudo equality algebra $F((X;\sim,\backsim,\wedge,1))=(X;\ra,\mar,\wedge,1)$. It follows that
$x\sim y=x\sim' y=y\rightarrow x$ and $x\backsim y=x\backsim' y=x\rightsquigarrow y$, for all $x,y\in X$.

By Proposition \ref{pr:property}, $F$ is closed under $\sim$ and $\backsim$.

\end{exm}

The following notion will enable us to study congruences via normal closed deductive systems.

\begin{defn}\label{5.3}
A deductive system $F$ of a pseudo equality algebra
$(X;\sim,\bsim,\wedge,1)$ is called {\it normal}
if, for all $x,y\in X$, we have: $x\sim y, y\sim x \in F \Longleftrightarrow y\bsim x, x\bsim y \in F$.

%\begin{itemize}
%\item[(i)] $x\sim y  \in F\Longleftrightarrow y\sim x\in F$;
%\item[(ii)]  $x\bsim y  \in F\Longleftrightarrow y\bsim x\in F$.
%\end{itemize}
\end{defn}

%\begin{prop}\label{pr:property1}
%Every normal deductive system of a pseudo equality algebra  is
%closed under $\sim$ and $\bsim$.
%\end{prop}

%\begin{proof}
%In fact, let $F$ be a normal deductive system. If $x,y\in F$, then $x=1\bsim x\in F$ and $x=x\sim 1 \in F$, so $x\bsim 1, 1\sim
%x\in F$. Similarly $1\bsim y,y\sim 1\in F$ and $1\sim
%y,y\bsim 1\in F$.
%From $1\sim x \le (1\sim y)\backsim (x\sim y)\in F$ we conclude $x\sim y \in F$. Analogously, $x\bsim 1 \le (x\bsim y) \sim (1\bsim y)\in F$ gives $x\bsim y\in F$.
%\end{proof}

An equivalence relation $\theta$ on a pseudo equality algebra $(X;\sim,\bsim,\wedge,1)$ is called
a {\it congruence relation} if, for all $*\in\{\sim,\bsim,\wedge\}$ and all $(a,b),(x,y)\in \theta$,
$(a*x,b*y)\in \theta$. Denote by $Con(X)$ the set of all congruence relation on a pseudo equality algebra $(X;\sim,\bsim,\wedge,1)$.

\begin{prop}\label{5.4}
If $\theta$ is a congruence relation on a pseudo equality algebra $(X;\sim,\bsim,\wedge,1)$, then
$F_\theta=[1]_\theta=\{x\in X|\ (x,1)\in\theta\}$ is a closed normal deductive system of $X$.
\end{prop}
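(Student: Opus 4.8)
The plan is to verify, in turn, that $F_\theta=[1]_\theta$ is a deductive system, that it is closed, and finally that it is normal; all three follow from the compatibility of $\theta$ with the operations $\sim,\bsim,\wedge$ together with the elementary properties collected in Proposition \ref{3.5}.

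First I would show $F_\theta$ is a $(\sim,\bsim)$-deductive system. Clearly $1\in F_\theta$. For the upset property, suppose $a\in F_\theta$ and $a\le b$, i.e.\ $a\wedge b=a$; applying the compatibility of $\theta$ with $\wedge$ to $(a,1)\in\theta$ and $(b,b)\in\theta$ gives $(a\wedge b,\,1\wedge b)=(a,b)\in\theta$, and since $(a,1)\in\theta$ we get $(b,1)\in\theta$ by symmetry and transitivity, so $b\in F_\theta$. For condition (b), suppose $a\in F_\theta$ and $b\sim a\in F_\theta$. From $(a,1)\in\theta$ and compatibility with $\sim$ (using $(b,b)\in\theta$) we get $(b\sim a,\,b\sim 1)\in\theta$, and $b\sim 1=b$ by $(F3)$, so $(b\sim a,b)\in\theta$; combined with $b\sim a\in F_\theta$ this yields $b\in F_\theta$. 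Hence $F_\theta$ is a deductive system.

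Next, closedness: I must check $x\sim y\in F_\theta$ and $x\bsim y\in F_\theta$ whenever $x,y\in F_\theta$. Since $(x,1)\in\theta$ and $(y,1)\in\theta$, compatibility with $\sim$ gives $(x\sim y,\,1\sim 1)=(x\sim y,1)\in\theta$ (using $(F2)$: $1\sim1=1$), so $x\sim y\in F_\theta$; the same argument with $\bsim$ gives $x\bsim y\in F_\theta$. Finally, for normality I must show $x\sim y,\,y\sim x\in F_\theta \Longleftrightarrow y\bsim x,\,x\bsim y\in F_\theta$. Assume the left side. Then $(x\sim y,1)\in\theta$ and $(y\sim x,1)\in\theta$, which by Proposition \ref{3.5}(iii) (applied after noting $x\sim y=1$ forces nothing directly, so instead) — more carefully: from $(x\sim y,1)\in\theta$ I want $(x,y)\in\theta$. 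Using $(F3)$ and $(F4)$-monotonicity one gets $x\le (y\sim x)\bsim y$ and $y\le(x\sim y)\bsim x$ from Proposition \ref{3.5}(ii); since $y\sim x\in F_\theta$ and $x\sim y\in F_\theta$ and $F_\theta$ is already shown closed and a deductive system, applying the deductive-system closure properties (Proposition \ref{pr:property} and Lemma \ref{le:6.1}) to these inequalities yields $(x,y)\in\theta$. Once $(x,y)\in\theta$ is established, compatibility with $\bsim$ gives $(x\bsim y,\,y\bsim y)=(x\bsim y,1)\in\theta$ and $(y\bsim x,1)\in\theta$ by $(F2)$, i.e.\ the right side holds. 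The reverse implication is symmetric, swapping the roles of $\sim$ and $\bsim$.

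The main obstacle is the normality step: the bare membership $x\sim y,y\sim x\in F_\theta$ does not by itself say $(x,y)\in\theta$, so one genuinely needs to pass through the order-theoretic estimates of Proposition \ref{3.5} (parts (ii), (iii)) to deduce $(x,y)\in\theta$ from the congruence class of $1$, and only then use compatibility with $\bsim$. The other two parts are routine applications of the substitution property of congruences together with $(F2)$ and $(F3)$.
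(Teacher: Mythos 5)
Your verification that $F_\theta$ is a deductive system and that it is closed is correct and coincides with the paper's argument (the paper in fact leaves the closedness check implicit; your explicit computation $(x\sim y,\,1\sim 1)=(x\sim y,1)\in\theta$ via $(F2)$ is exactly what is needed). You also correctly isolate the crux of normality: one must first derive $(x,y)\in\theta$ from $x\sim y,\,y\sim x\in F_\theta$, and the right tool is the pair of inequalities $y\le (x\sim y)\bsim x$ and $x\le (y\sim x)\bsim y$ from Proposition \ref{3.5}(ii).

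However, the step that actually produces $(x,y)\in\theta$ is not valid as you state it. You invoke ``deductive-system closure properties (Proposition \ref{pr:property} and Lemma \ref{le:6.1})'', but those results only ever yield conclusions of the form $z\in F_\theta$, i.e.\ $(z,1)\in\theta$; they cannot relate two arbitrary elements $x,y$ modulo $\theta$ when neither need lie in $F_\theta$. The inference has to go through the substitution property of the congruence $\theta$ itself, as in the paper: from $(x\sim y,1)\in\theta$ and compatibility with $\bsim$ one gets $\bigl((x\sim y)\bsim x,\;1\bsim x\bigr)\in\theta$, and $1\bsim x=x$ by $(F3)$; since $y\le (x\sim y)\bsim x$ means $y=y\wedge\bigl((x\sim y)\bsim x\bigr)$, compatibility with $\wedge$ gives $(y,\,y\wedge x)\in\theta$. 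Symmetrically $(y\sim x,1)\in\theta$ gives $(x,\,x\wedge y)\in\theta$, whence $(x,y)\in\theta$, and only then do $(x\bsim y,\,y\bsim y)=(x\bsim y,1)\in\theta$ and $(y\bsim x,1)\in\theta$ follow. With this repair (and its mirror image for the converse implication) your proof agrees with the paper's.
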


\begin{proof}
First we show that $F_\theta$ is a deductive system. Clearly, $1\in F_\theta$. Let $y\sim x,x\in F_\theta$. Then
$(x,1)\in\theta$, so $(y\sim x,y)=(y\sim x,y\sim 1)\in\theta$ and hence $(1,y)\in\theta$. Thus,
$y\in F_\theta$.
Suppose that $x\leq y$, $x\in F_\theta$ and $y\in X$. Then
$$
(1,x)\in\theta\Rightarrow (x\wedge y,1\wedge y)\in\theta\Rightarrow (x,y)\in\theta\Rightarrow (1,y)\in\theta \Rightarrow y\in F_\theta
$$
and so $F_\theta$ is a deductive system. Now, we show that $F_\theta$ is normal.
Let $x\sim y, y\sim x\in F_\theta$ for some $x,y\in X$. From $(x\sim y,1)\in \theta$ and  Proposition \ref{3.5}(ii), it follows that
$y=y\wedge((x\sim y)\bsim x)$, $y\wedge((x\sim y)\bsim x)\theta y\wedge (1\bsim x)$ and
$ y\wedge (1\bsim x)=y\wedge x$ giving $(y,y\wedge x)\in\theta$.
Similarly, $(y\sim x,1)\in \theta$ implies that $(x,x\wedge y)\in\theta$ and so $(x,y)\in \theta$.
Hence, $(1,y\bsim x)=(x\bsim x,y\bsim x)\in\theta$ and $(x\bsim y,1)=(x\bsim y,y\bsim y)\in\theta$, whence
$y\bsim x, x\bsim y\in F_\theta$. Conversely, assume $y\bsim x, x\bsim y\in F_\theta$. In a similar way we can show that $x\sim y, y\sim x\in F_\theta$ which proves $F_\theta$ is normal.
%
%and $F_\theta$ is normal. Finally, for all $x,y\in F_\theta$, we have
%$(1,x),(1,y)\in\theta$, hence $(1,x\sim y),(1,x\bsim y)\in\theta$, which  implies that $x\sim y,x\bsim y\in F_\theta$.
\end{proof}

\begin{prop}\label{5.5}
Let $F$ be a deductive system of a pseudo equality algebra $(X;\sim,\bsim,\wedge,1)$.
\begin{itemize}
\item[{\rm (i)}] The relation
$\theta_F=\{(x,y)\in X\times X| \ x\sim y,y\sim x, x\bsim y, y\bsim x\in F\}$
is an equivalence relation on $X$.
\item[{\rm (ii)}] If $F$ is normal, then
$\theta_F=\{(x,y)\in X\times X| \ x\sim y,y\sim x\in F\}=
\{(x,y)\in X\times X| \ x\bsim y,y\bsim x\in F\}$ is a congruence relation on $X$.
\end{itemize}
\end{prop}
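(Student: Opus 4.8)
The plan is to prove Proposition \ref{5.5} in two parts, mirroring the structure of Proposition \ref{5.4} and exploiting the machinery already built up in Proposition \ref{3.5} and Lemma \ref{le:6.1}.

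For part (i), I would check reflexivity, symmetry and transitivity of $\theta_F$ directly. Reflexivity is immediate from $(F2)$ (i.e. $x\sim x=1=x\bsim x\in F$) and symmetry is built into the definition by the presence of all four terms $x\sim y, y\sim x, x\bsim y, y\bsim x$. The only real work is transitivity: assuming $(x,y),(y,z)\in\theta_F$ I must show $x\sim z, z\sim x, x\bsim z, z\bsim x\in F$. The key tool is Proposition \ref{3.5}(iv): from $y\sim z=1$-type membership one does not get equality but one can use the closure properties instead. Concretely, from $x\sim y\in F$ and $y\sim z\in F$ I would apply $(F7)$, namely $x\sim y\leq (x\sim z)\sim(y\sim z)$, to get $(x\sim z)\sim(y\sim z)\in F$; then since $y\sim z\in F$ and $F$ is a deductive system closed under the detachment rule $(b)$, I conclude $x\sim z\in F$. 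The analogous argument with $(F7)$ for $\bsim$ handles $x\bsim z\in F$, and symmetry of the hypothesis gives $z\sim x, z\bsim x\in F$. So transitivity reduces to one application of $(F7)$ plus one detachment, in each of four symmetric instances.

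For part (ii), I would first show that under normality the three displayed sets coincide. The inclusion of $\theta_F$ (the four-term set) in each two-term set is trivial; for the reverse, if $x\sim y, y\sim x\in F$ then normality (Definition \ref{5.3}) gives $x\bsim y, y\bsim x\in F$, so the two-term $\sim$-set equals the four-term set, and symmetrically for the $\bsim$-set. Then it remains to prove $\theta_F$ is a congruence: I must show that $(a,b),(x,y)\in\theta_F$ imply $(a*x,b*y)\in\theta_F$ for $*\in\{\sim,\bsim,\wedge\}$. By transitivity (part (i)) it suffices to treat one coordinate at a time, i.e. show $(a,b)\in\theta_F$ implies $(a*x,b*x)\in\theta_F$ and $(x*a,x*b)\in\theta_F$ for each $*$. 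For the $\wedge$ case I would use $(F5)$: $a\sim b\leq (a\wedge x)\sim(b\wedge x)$, so $a\sim b\in F$ gives $(a\wedge x)\sim(b\wedge x)\in F$, and similarly for $\bsim$ and for the other order; combined with Proposition \ref{3.5}(v)/(vi)-style reasoning or just the four-term definition via normality this yields $(a\wedge x, b\wedge x)\in\theta_F$. For the $\sim$ case in the second coordinate I would use $(F6)$: $a\sim b\leq (c\sim a)\bsim(c\sim b)$ gives $(x\sim a)\bsim(x\sim b)\in F$, and then I need the companion $(x\sim a)\sim(x\sim b)\in F$ — here is where I expect to lean on normality, converting membership of the $\bsim$-pair into membership of the $\sim$-pair after also producing $(x\sim b)\bsim(x\sim a)\in F$ from $b\sim a\in F$. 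For the first coordinate of $\sim$ I would use $(F7)$ as in part (i). The $\bsim$ cases are handled by the $\bsim$-clauses of $(F6)$ and $(F7)$ symmetrically.

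The main obstacle I anticipate is bookkeeping in part (ii): for each operation and each argument slot I get, from $(F6)$ or $(F7)$, membership of only \emph{one} of the two ``directions'' ($\sim$ versus $\bsim$) of the relevant pair, and I must invoke normality to upgrade to membership of both, which in turn requires that I have already established both $u\sim v\in F$ and $v\sim u\in F$ (not just one) before normality applies. So the argument has to be organized carefully: first derive all $\sim$-memberships in both directions using $(F7)$ (whose conclusion stays within $\sim$) and the $(F7)$-clause for $\bsim$ staying within $\bsim$, establish the pair symmetrically, and only then apply normality to cross over. A clean way to avoid duplicating work is to prove the substitution property for $\sim$ in both slots first (using $(F7)$ and $(F6)$), observe that symmetry of hypotheses gives the reversed pairs for free, then invoke normality once to get the $\bsim$-pairs, and finally note the $\wedge$-case follows from $(F5)$ together with what has been shown. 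Everything else is routine application of the deductive-system detachment rule and the inequalities of Proposition \ref{3.5}.
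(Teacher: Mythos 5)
Your proposal is correct and follows essentially the same route as the paper's proof: reflexivity/symmetry are immediate, transitivity comes from one axiom application plus detachment (you use $(F7)$ where the paper uses $(F6)$, but both work identically), and in (ii) you make the same one-coordinate-at-a-time reduction, handle $\wedge$ via $(F5)$, and use $(F6)$/$(F7)$ to land both directions of each derived pair in a single flavor ($\sim$ or $\bsim$) before invoking normality's two-term characterizations of $\theta_F$ -- exactly the paper's organization. The bookkeeping concern you flag is real but resolved just as you describe, so there is no gap.
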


\begin{proof}
(i) Clearly, $\theta_F$ is reflexive and symmetric. Let $(x,y),(y,z)\in \theta_F$.
Then by $(F6)$, $x\bsim y\leq (x\bsim z)\sim (y\bsim z)$, so $(x\bsim z)\sim (y\bsim z)\in F$
(since $x\bsim y\in F$). Using $y\bsim z\in F$, we get that $x\bsim z\in F$. Moreover,
from $z\bsim y\leq (z\bsim x)\sim (y\bsim x)$, $z\bsim y\in F$ and $y\bsim x\in F$ we get that
$z\bsim x\in F$. In a similar way, we can show that $x\sim z,z\sim x\in F$.
Thus $(x,z)\in\theta_F$ and so
$\theta_F$ is an equivalence relation on $X$.

(ii) Since $F$ is normal, clearly, $\theta_F=\{(x,y)\in X\times X| \ x\sim y,y\sim x\in F\}=
\{(x,y)\in X\times X| \ x\bsim y,y\bsim x\in F\}$. Put $(a,b),(x,y)\in\theta_F$.

(1) By $(F5)$, $b\sim a\leq (b\wedge x)\sim (a\wedge x)$,
$a\sim b\leq (a\wedge x)\sim (b\wedge x)$, so $(a\wedge x,b\wedge x)\in\theta_F$. Similarly,
$(b\wedge x,b\wedge y)\in\theta_F$. From (i), it follows that $(a\wedge x,b\wedge y)\in\theta_F$.

(2) By $(F6)$, we have $x\bsim y\leq (x\bsim a)\sim (y\bsim a)$ and  $y\bsim x\leq (y\bsim a)\sim (x\bsim a)$,
hence $(x\bsim a)\sim (y\bsim a),(y\bsim a)\sim (x\bsim a)\in F$ and so
$(x\bsim a,y\bsim a)\in\theta_F$. Moreover, by $(F7)$,
$a\bsim b\leq (y\bsim a)\bsim (y\bsim b)$ and $b\bsim a\leq (y\bsim b)\bsim (y\bsim a)$, so
$(y\bsim a,y\bsim b)\in\theta_F$. Therefore,  $(x\bsim a,y\bsim a)\in\theta_F$ and
$(y\bsim a,y\bsim b)\in\theta_F$, and by (i), we obtain $(x\bsim a,y\bsim b)\in\theta_F$.
In a similar way, we can prove that $(x\sim a,y\sim b)\in\theta_F$. Consequently,
$\theta_F$ is a congruence relation on $X$.
\end{proof}

\begin{thm}\label{5.6}
Let $(X;\sim,\bsim,\wedge,1)$ be a pseudo equality algebra. Then
there is a one-to-one correspondence between the set of all normal closed deductive systems of $X$, $NCDS(X)$, and $Con(X)$.
\end{thm}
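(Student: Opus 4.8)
The plan is to exhibit the maps in both directions and show they are mutually inverse. Given a normal closed deductive system $F \in NCDS(X)$, we have already constructed in Proposition \ref{5.5}(ii) a congruence relation $\theta_F \in Con(X)$. Conversely, given $\theta \in Con(X)$, Proposition \ref{5.4} shows that $F_\theta = [1]_\theta$ is a closed normal deductive system. So the two assignments $F \mapsto \theta_F$ and $\theta \mapsto F_\theta$ are well-defined maps between $NCDS(X)$ and $Con(X)$; it remains to prove that $\theta_{F_\theta} = \theta$ for every congruence $\theta$ and $F_{\theta_F} = F$ for every normal closed deductive system $F$.

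For the first identity $F_{\theta_F} = F$: by definition $F_{\theta_F} = [1]_{\theta_F} = \{x \in X : (x,1) \in \theta_F\} = \{x : x \sim 1, 1 \sim x, x \bsim 1, 1 \bsim x \in F\}$. Using $(F3)$ we have $x \sim 1 = x$ and $1 \bsim x = x$, so membership reduces to $x, 1\sim x, x \bsim 1 \in F$. Since $F$ is closed and $x \in F$, Proposition and its characterization give $1 \sim x, x \bsim 1 \in F$ automatically; hence $F_{\theta_F} = \{x : x \in F\} = F$. For the second identity $\theta_{F_\theta} = \theta$: one inclusion is easy — if $(x,y) \in \theta$ then applying the congruence to the operations $\sim, \bsim$ together with $(F2)$ (i.e. $x \sim x = 1$) yields $(x \sim y, 1) = (x \sim y, x \sim x) \in \theta$ and similarly for $y \sim x$, $x \bsim y$, $y \bsim x$, so all four elements lie in $[1]_\theta = F_\theta$, i.e. $(x,y) \in \theta_{F_\theta}$. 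For the reverse inclusion, suppose $(x,y) \in \theta_{F_\theta}$, so $x \sim y, y \sim x \in F_\theta = [1]_\theta$, meaning $(x \sim y, 1), (y \sim x, 1) \in \theta$. Mimicking the argument in the proof of Proposition \ref{5.4}: from $(x \sim y, 1) \in \theta$ and Proposition \ref{3.5}(ii), $y = y \wedge ((x\sim y)\bsim x)$, and applying $\theta$-compatibility with $\wedge$ and $\bsim$ together with $1 \bsim x = x$ gives $(y, y \wedge x) \in \theta$; symmetrically $(y\sim x,1)\in\theta$ gives $(x, x \wedge y) \in \theta$, and combining these yields $(x,y) \in \theta$.

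Finally I would observe that both maps are order-preserving with respect to inclusion of deductive systems and inclusion of congruences (this is immediate from the definitions of $\theta_F$ and $F_\theta$), so the bijection is in fact an order isomorphism between the poset $NCDS(X)$ and the congruence lattice $Con(X)$. The main obstacle is the reverse inclusion $\theta_{F_\theta} \subseteq \theta$: it requires carefully re-running the computation from the proof of Proposition \ref{5.4} — the delicate point being that membership of $x \sim y$ in $[1]_\theta$ must be leveraged, via the inequality $a \leq (c \sim a)\bsim c$ from Proposition \ref{3.5}(ii) and the substitution $c = x$, $a = y$ (and the dual), to recover full $\theta$-equivalence of $x$ and $y$ from equivalence of the ``distance'' terms to $1$. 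Everything else is bookkeeping, using $(F2)$, $(F3)$, closedness, and normality of $F$ to pass freely between the $\sim$- and $\bsim$-descriptions of $\theta_F$.
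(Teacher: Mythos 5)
Your proposal is correct and follows essentially the same route as the paper: both directions of the correspondence are obtained from Propositions \ref{5.4} and \ref{5.5}, the identity $F_{\theta_F}=F$ is reduced via $(F3)$ and closedness of $F$, and the identity $\theta_{F_\theta}=\theta$ is obtained by re-running the computation from the proof of Proposition \ref{5.4} using Proposition \ref{3.5}(ii). The added remark that the bijection is an order isomorphism is a harmless (and true) bonus not present in the paper.
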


\begin{proof}
Let $\phi: Con(X)\ra NCDS(X)$ be a mapping defined by $\phi(\theta)=F_\theta$ for all $\theta\in Con(X)$
(by Proposition \ref{5.4} it is well defined).
Let $\theta$ be a congruence relation on $(X;\sim,\bsim,\wedge,1)$. It is clear that
$\theta\s \theta_{F_{\theta}}$. Put $(x,y)\in\theta_{F_{\theta}}$.
Then $x\sim y,y\sim x\in F_\theta$, so $(x\sim y,1),(y\sim x,1)\in\theta$.
Similar to the proof of
Proposition \ref{5.4}, we have $(x,x\wedge y),(y,x\wedge y)\in\theta$.
It follows that $(x,y)\in\theta$, which yields $\theta=\theta_{F_{\theta}}$.
If $\theta,\theta'\in Con(X)$ such that $\phi(\theta)=\phi(\theta')$,
then $\theta=\theta_{F_{\theta}}=\theta_{F_{\theta'}}=\theta'$.
Now, let $F$ be a normal closed deductive system of $X$. Since $F$ is closed, then
$$
x\in [1]_{\theta_{F}} \Leftrightarrow (x,1)\in\theta_F\Leftrightarrow x\sim 1, 1\sim x\in F \Leftrightarrow x\in F.
$$
and hence $F=\phi(\theta_F)$. By summing up the above results, we get that $\phi$ is a one-to-one correspondence.
\end{proof}

\begin{thm}\label{thm:5.7}
Let $(X;\sim,\bsim,\wedge,1)$ be a pseudo equality algebra and $F$ be a normal deductive system of $X$. Then $(X/F;\simeq,\backsimeq,\underline{\wedge},1/F)$, where
$X/F=\{x/F|\ x\in X\}$ with elements $x/F=\{y\in X|\ (x,y)\in \theta_F\}$ is endowed with binary operations $\simeq,\backsimeq,\underline{\wedge}$ and with a nullary operation $1/F$, is a pseudo equality algebra, where
$$
x/F\simeq y/F:=(x\sim y)/F,\quad x/F\backsimeq y/F:=(x\bsim y)/F,\quad x/F\underline{\wedge}\ y/F:=(x\wedge y)/F.
$$
\end{thm}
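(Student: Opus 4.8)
The plan is to verify that the three operations $\simeq,\backsimeq,\underline{\wedge}$ on $X/F$ are well defined and then check that the axioms $(F1)$--$(F7)$ hold, transporting them from $X$ via the canonical projection $\pi\colon X\to X/F$, $x\mapsto x/F$. For well-definedness, observe that by Proposition \ref{5.5}(ii) the relation $\theta_F$ is a congruence on $X$ whenever $F$ is a normal deductive system; here $F$ is only assumed normal, not closed, but inspecting the proof of Proposition \ref{5.5} one sees closedness is not used (it is needed only to recover $F$ from $\theta_F$, which is irrelevant for forming the quotient). Hence $(x,x'),(y,y')\in\theta_F$ imply $(x\ast y,x'\ast y')\in\theta_F$ for each $\ast\in\{\sim,\bsim,\wedge\}$, which is exactly the statement that $\simeq,\backsimeq,\underline{\wedge}$ are well-defined on classes, and that $\pi$ is a surjective homomorphism of algebras of type $(2,2,2,0)$ onto $(X/F;\simeq,\backsimeq,\underline{\wedge},1/F)$.

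Next I would verify the axioms. Since the defining conditions of a pseudo equality algebra are expressible by equations and order-inequalities (using Remark \ref{re:3.2}(1), every inequality $u\le v$ rewrites as the equation $u\wedge v=u$), and since $\pi$ is a surjective homomorphism, each such identity valid in $X$ descends to $X/F$: for a term identity $s=t$ one has $s^{X/F}(\pi\bar a)=\pi(s^X(\bar a))=\pi(t^X(\bar a))=t^{X/F}(\pi\bar a)$, and surjectivity of $\pi$ makes this hold for all arguments in $X/F$. Concretely, $(F1)$ follows because the $\wedge$-semilattice order on $X/F$ is $x/F\le y/F$ iff $(x\wedge y)/F=x/F$, with $1/F$ as top; $(F2)$, $(F3)$ are immediate from $a\sim a=1$, $a\sim 1=a$, $1\bsim a=a$ in $X$; $(F4)$--$(F7)$ follow by applying $\pi$ to the corresponding inequalities in $X$, using monotonicity of $\pi$ with respect to the two orders (if $a\le b$ in $X$ then $a/F\le b/F$, since $a\wedge b=a$ gives $(a\wedge b)/F=a/F$).

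The only genuine point requiring care — and the one I would present in full — is that the partial order on $X/F$ induced by the semilattice operation $\underline{\wedge}$ is compatible with the class map in the direction needed for $(F4)$: namely that $x/F\le y/F\le z/F$ in $X/F$ does not force $x\le y\le z$ in $X$, so one cannot simply quote the inequality in $X$ for the given representatives. To handle this I would argue via the $(\ra,\mar)$-reduct: by Theorem \ref{4.2}(i), $(X;\ra,\mar,\wedge,1)$ is a pseudo $BCK$-meet-semilattice satisfying (xii) of Proposition \ref{3.5}, and $\theta_F$ is equally a congruence for it (the operations $\ra,\mar$ are terms in $\sim,\bsim,\wedge$). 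Then $x/F\le y/F$ in $X/F$ is equivalent to $(x\ra y)/F=1/F$, i.e. $x\ra y\in F$, an intrinsic condition on $F$; from this and the quasi-equational presentation $(SL1),(SL2),(PBCK1)$--$(PBCK3)$ of pseudo $BCK$-meet-semilattices one checks $X/F$ is again such an algebra satisfying (xii), and then Theorem \ref{4.2}(ii) converts it back into a pseudo equality algebra whose operations coincide with $\simeq,\backsimeq$. This detour cleanly avoids the representative issue. I expect the bookkeeping around identifying the reconstructed operations with $\simeq,\backsimeq$ to be the main obstacle, but it is routine given Remark \ref{4.3} and part (xiii) of Proposition \ref{3.5}.
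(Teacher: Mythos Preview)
Your second paragraph already contains a complete and correct proof, and it is cleaner than the paper's: by Remark~\ref{re:3.2}(1) the implicational axiom $(F4)$ is equivalent to the unconditional identities $(F4')$, so \emph{all} the defining conditions are equational; since $\theta_F$ is a congruence (Proposition~\ref{5.5}(ii), which indeed does not use closedness), the quotient automatically lies in the variety. The paper instead verifies $(F4)$ by hand: given $a/F\le b/F\le c/F$ it produces concrete representatives $u:=a\wedge b\wedge c$, $v:=b\wedge c$, $w:=c$ with $u\le v\le w$ in $X$ and $u\in a/F$, $v\in b/F$, $w\in c/F$, then applies $(F4)$ in $X$ to $u,v,w$ and pushes down. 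Your variety argument buys a one-line proof; the paper's buys an explicit construction that does not appeal to universal algebra.

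Your third paragraph, however, is both unnecessary and flawed. It is unnecessary because the worry you raise---that $x/F\le y/F\le z/F$ does not lift to $x\le y\le z$---is exactly what the equational reformulation $(F4')$ sidesteps; there is no hypothesis to lift. It is flawed because the reconstruction via Theorem~\ref{4.2}(ii) does \emph{not} return the original operations: from the derived $\ra',\mar'$ on $X/F$ one obtains $a/F\sim'' b/F=b/F\ra' a/F=((a\wedge b)\sim b)/F$, and there is no reason for this to equal $(a\sim b)/F$ unless $X$ (hence $X/F$) is invariant in the sense of Definition~\ref{4.4.0}. Remark~\ref{4.3} and Proposition~\ref{3.5}(xiii) only tell you $a\ra b=a\ra(a\wedge b)$; they say nothing about recovering $\sim$ from $\ra$. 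So drop the detour and rely on the variety argument, or---if you want to stay elementary---mimic the paper's representative trick for $(F4)$.
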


\begin{proof}
Clearly, $(X/F;\underline{\wedge},1/F)$ is a meet-semilattice with top element $1/F$ and
$x/F\leq y/F \Leftrightarrow x/F\underline{\wedge}\ y/F=x/F\Leftrightarrow x/F=(x\wedge y)/F\Leftrightarrow x\wedge y\in x/F$.
Moreover, it satisfies the conditions
$(F2)$, $(F3)$, $(F5)-(F7)$ hold.
Let $a/F\leq b/F\leq c/F$, for some $a,b,c\in X$. Then $a\wedge b\in a/F$ and $b\wedge c\in b/F$, hence
$a\wedge (b\wedge c)\theta_F a\wedge b\theta_F a$. Set $u:=a\wedge b\wedge c$, $v=b\wedge c$ and $w=c$. Then
$u\in a/F$, $v\in b/F$, $w\in c/F$ and $u\leq v\leq w$. By $(F3)$, we get that
$u\sim w\leq v\sim w$, $u\bsim w\leq u\bsim v$, $w\sim u\leq w\sim v$ and $w\bsim u\leq v\bsim a$.
\begin{eqnarray*}
u\sim w\leq v\sim w\Rightarrow u\sim w=(u\sim w)\wedge (v\sim w)
\Rightarrow u/F\simeq w/F=(u\sim w)/F=((u\sim w)\wedge (v\sim w))/F\\
=(u/F\simeq w/F)\underline{\wedge} (v/F\simeq w/F)\Rightarrow u/F\simeq w/F\leq v/F\simeq w/F\Rightarrow
a/F\simeq c/F\leq b/F\simeq c/F.
\end{eqnarray*}
In a similar way, it can be proved that $a/F\simeq c/F\leq a/F \simeq b/F$, $c/F\backsimeq a/F\leq c/F\backsimeq b/F$ and
$c/F\backsimeq a/F\leq b/F\backsimeq a/F$. Therefore,
$(X/F;\simeq,\backsimeq,\underline{\wedge},1/F)$ is a pseudo equality algebra.
\end{proof}

\begin{defn}
A deductive system $F$ of a pseudo equality algebra $(X;\sim,\bsim,\wedge,1)$ is called
{\it commutative} if $(a\sim b)\sim(b\sim a)\in F$ for all $a,b\in X$.
\end{defn}
In the next theorem, we use a normal commutative deductive system to obtain an equality algebra from a pseudo equality algebra.

\begin{prop}\label{prop:5.8}
Let $F$ be a normal commutative deductive system of a pseudo equality algebra $(X;\sim,\bsim,\wedge,1)$. Then
$(X/F;\simeq,\underline{\wedge},1/F)$ is an equality algebra.
\end{prop}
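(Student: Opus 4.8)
The plan is to obtain the pseudo equality algebra structure on the quotient essentially for free from Theorem \ref{thm:5.7}, and then to observe that a pseudo equality algebra, restricted to its $\sim$-reduct, is already almost an equality algebra --- the only missing ingredient being commutativity of $\sim$, which is precisely what a commutative deductive system provides. First I would record that, since $F$ is in particular a normal deductive system, Theorem \ref{thm:5.7} yields that $(X/F;\simeq,\backsimeq,\underline{\wedge},1/F)$ is a pseudo equality algebra, and that by Proposition \ref{5.5}(ii) its associated congruence can be written purely in terms of $\sim$, namely $\theta_F=\{(x,y)\in X\times X\mid x\sim y,\ y\sim x\in F\}$. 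Then I would verify the axioms $(E1)$--$(E7)$ for the reduct $(X/F;\simeq,\underline{\wedge},1/F)$ by matching them against the pseudo equality algebra axioms already available for $X/F$: $(E1)$ is $(F1)$; $(E3)$ is the first identity of $(F2)$; $(E4)$ is the first identity of $(F3)$; $(E5)$ is the first two inequalities of $(F4)$; $(E6)$ is the first inequality of $(F5)$; and $(E7)$ is the first inequality of $(F7)$. Thus $(E1)$ and $(E3)$--$(E7)$ hold automatically, and everything reduces to checking the commutativity axiom $(E2)$.

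The one substantive step is $(E2)$, namely $a/F\simeq b/F=b/F\simeq a/F$ for all $a,b\in X$, and this is the only place the hypothesis that $F$ is commutative is used. Given $a,b\in X$, commutativity of $F$ gives $(a\sim b)\sim(b\sim a)\in F$; interchanging the roles of $a$ and $b$ gives also $(b\sim a)\sim(a\sim b)\in F$. By the $\sim$-description of $\theta_F$ recorded above, these two memberships say exactly $(a\sim b,\,b\sim a)\in\theta_F$, hence $(a\sim b)/F=(b\sim a)/F$, i.e.\ $a/F\simeq b/F=b/F\simeq a/F$. With $(E2)$ established, $(X/F;\simeq,\underline{\wedge},1/F)$ satisfies all of $(E1)$--$(E7)$ and is therefore an equality algebra.

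I do not anticipate a genuine obstacle; the only real insight is that, once Theorem \ref{thm:5.7} is invoked, nothing beyond $(E2)$ needs to be done. The trap to avoid is attempting instead to prove the stronger assertion $\simeq=\backsimeq$ on $X/F$ and routing the proof through Remark \ref{3.2}: that would additionally require commutativity of $\backsimeq$ on $X/F$, which a commutative deductive system does not obviously supply, so the clean path is to argue directly on the $\simeq$-reduct as above.
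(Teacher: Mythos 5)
Your proof is correct. The first half (deducing $(a\sim b)/F=(b\sim a)/F$ from $(a\sim b)\sim(b\sim a)\in F$, its $a\leftrightarrow b$ swap, and the $\sim$-only description of $\theta_F$ for normal $F$ in Proposition \ref{5.5}(ii)) is exactly the paper's first step. Where you diverge is in how the conclusion is reached: you verify $(E1)$ and $(E3)$--$(E7)$ directly by observing that each is literally the $\sim$-half of the corresponding axiom $(F1)$--$(F7)$ for the quotient pseudo equality algebra, so that only $(E2)$ needs the commutativity hypothesis; this matching is accurate and gives a clean, self-contained argument. The paper instead takes the route you label a ``trap'': having shown $\simeq$ commutative, it uses $(F6)$ and $(F3)$ to get $a/F\backsimeq b/F\leq (1/F)\simeq(b/F\backsimeq a/F)=(b/F\backsimeq a/F)\simeq(1/F)=b/F\backsimeq a/F$ and symmetrically, so $\backsimeq$ is also commutative, and then invokes Remark \ref{3.2} to conclude $\simeq=\backsimeq$. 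So your worry that commutativity of $\backsimeq$ ``is not obviously supplied'' is unfounded --- it follows from commutativity of $\simeq$ by the displayed computation --- and the paper's route is not a dead end; it even yields the slightly stronger fact that the two quotient operations coincide. Your route buys directness and avoids Remark \ref{3.2} entirely; the paper's buys the extra identification $\simeq=\backsimeq$ on $X/F$. Both are valid proofs of the stated proposition.
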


\begin{proof}
By Theorem \ref{thm:5.7}, $(X/F;\simeq,\backsimeq,\underline{\wedge},1/F)$ is a pseudo equality algebra. Put $a,b\in X$.
Then by the assumption, $(a\sim b)\sim (b\sim a),(b\sim a)\sim (b\sim a)\in F$ and so
$(a\sim b)/F=(b\sim a)/F$. By $(F6)$,
$a/F\backsimeq b/F\leq (a/F\backsimeq a/F)\simeq  (b/F\backsimeq a/F)=(1/F)\simeq  (b/F\backsimeq a/F)=(b/F\backsimeq a/F)\simeq (1/F)=
b/F\backsimeq a/F$. Similarly, $b/F\backsimeq a/F\leq a/F\backsimeq b/F$. Thus, $\backsimeq$ and $\simeq$ are commutative binary operations
on $X/F$. Therefore, Remark \ref{3.2} implies that $(X/F;\simeq,\underline{\wedge},1/F)$ is an equality algebra.
\end{proof}

Now we describe some properties of the lattice of congruences of a pseudo equality algebra.

\begin{defn}\cite{Ursini}
A variety is said to be {\it subtractive} if it satisfies the identities
$S(x,x)=0$ and $S(x,0)=x$ for some binary term $S$ and some nullary term $0$.
\end{defn}

\begin{thm}\label{subtractive}
The variety of pseudo equality algebras is subtractive.
\end{thm}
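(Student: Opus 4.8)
The plan is to exhibit an explicit binary term $S(x,y)$ and nullary term $0$ witnessing subtractivity, in the sense of the preceding definition. The natural candidate for the constant is $0 := 1$ (the distinguished top element), and for the subtraction term the natural candidate is $S(x,y) := y \ra x = (x\wedge y)\sim x$, or symmetrically $S(x,y) := y \mar x$. With this choice the two required identities become $S(x,x) = x\ra x = 1$ and $S(x,1) = 1 \ra x = x$. First I would verify $S(x,x)=1$: this is immediate from Proposition \ref{3.5}(vi), which states $a\ra a = 1$. Then I would verify $S(x,1) = x$: again Proposition \ref{3.5}(vi) gives $1 \ra a = a$ directly.

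Hence the variety of pseudo equality algebras satisfies $S(x,x) = 0$ and $S(x,0) = x$ with $S(x,y) = y\ra x$ and $0 = 1$, which is exactly the defining condition for a subtractive variety; this completes the argument. I should note that $\ra$ is a term operation in the language, being defined by $x \ra y = (x\wedge y)\sim x$ from the basic operations $\sim$ and $\wedge$, so $S$ is a legitimate binary term over the signature $(\sim,\bsim,\wedge,1)$.

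There is essentially no obstacle here: the whole statement reduces to two one-line identities that were already collected in Proposition \ref{3.5}(vi). The only point requiring a moment's care is confirming that the chosen $S$ is genuinely a term in the algebra's signature rather than merely a derived relation — but since $\ra$ and $\mar$ were introduced precisely as abbreviations for compositions of the fundamental operations, this is automatic, and the same works with $S(x,y) = y\mar x$ using $1\mar a = a$ and $a\mar a = 1$ from the same list. I would write the proof in three sentences: name $S$ and $0$, invoke Proposition \ref{3.5}(vi) for $S(x,x)=1$, invoke it again for $S(x,1)=x$.

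\begin{proof}
Set $0 := 1$ and let $S(x,y) := y \ra x = (x \wedge y)\sim x$, which is a binary term in the signature $(\sim,\bsim,\wedge,1)$. By Proposition \ref{3.5}(vi) we have $S(x,x) = x\ra x = 1 = 0$ and $S(x,0) = 1\ra x = x$ for all $x$ in any pseudo equality algebra. Thus the variety of pseudo equality algebras satisfies the identities $S(x,x) = 0$ and $S(x,0) = x$, so it is subtractive. (One may equally take $S(x,y) := y \mar x$ and use $a\mar a = 1$ and $1\mar a = a$ from Proposition \ref{3.5}(vi).)
\end{proof}
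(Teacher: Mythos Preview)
Your argument is correct and in essence identical to the paper's, which simply takes $S(x,y):=x\sim y$ and $0:=1$ and invokes $(F2)$ and $(F3)$ directly; your derived term $y\ra x$ reduces to exactly these identities at the relevant evaluations, since $x\ra x=(x\wedge x)\sim x=x\sim x$ and $1\ra x=(1\wedge x)\sim 1=x\sim 1$. One small slip: your displayed expansion $y\ra x=(x\wedge y)\sim x$ is wrong---by definition $y\ra x=(y\wedge x)\sim y$, so the term you wrote is actually $x\ra y$, which would give $S(x,1)=x\ra 1=1\neq x$; fortunately your verification uses the intended term $y\ra x$, so the proof stands once the expansion is corrected.
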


\begin{proof}
Putting $S(x,y)=x\sim y$ and $0=1$, the proof follows from $(F2)$ and $(F3)$.
\end{proof}

\begin{thm}\label{Permutable}
The variety of pseudo equality algebras is congruence permutable and
congruence distributive.
\end{thm}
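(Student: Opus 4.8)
The plan is to use the classical Mal'cev-type criteria: a variety is congruence permutable if and only if it has a Mal'cev term $p(x,y,z)$ satisfying $p(x,y,y)=x$ and $p(x,x,z)=z$, and (by Jónsson's lemma, or more directly) it is congruence distributive if it has a sequence of majority-type or Jónsson terms; since we already know from Theorem \ref{subtractive} that the variety is subtractive and 1-regular-flavoured, the cleanest route is to exhibit a single Mal'cev term and then invoke the fact that a congruence permutable variety is congruence distributive as soon as it has a majority term, or alternatively produce a $2/3$-minority (Pixley) term $m(x,y,z)$ satisfying $m(x,y,y)=m(y,y,x)=x$ and $m(x,y,x)=x$, which simultaneously witnesses arithmeticity (permutability plus distributivity). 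So first I would search for a Pixley term built from $\sim$, $\bsim$, $\wedge$ and $1$.

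The natural candidate, guided by Example \ref{phooh} and the derived operations $\ra,\mar$, is to set
\[
m(x,y,z) := \bigl((x\ra y)\mar z\bigr)\wedge\bigl((z\ra y)\mar x\bigr)\wedge\bigl(x\vee z\text{-type correction}\bigr),
\]
but since we have no join, I would instead work with the term
\[
m(x,y,z):=\bigl((x\sim y)\bsim z\bigr)\wedge\bigl((z\sim y)\bsim x\bigr),
\]
or a variant using $\ra,\mar$, and verify the Pixley identities. Using $(F2)$, $(F3)$ and parts (v)--(viii) of Proposition \ref{3.5}: when $y=z$ one checks $x\sim y \le ?$ collapses the first factor to something $\ge x$ and the second to $(x\sim x)\bsim x = 1\bsim x = x$ wait — one must be careful: $1\bsim x=x$ holds by $(F3)$, and $x\le (b\ra a)\wedge(b\mar a)$ by Proposition \ref{3.5}(vii) gives the required inequalities in one direction, while Proposition \ref{3.5}(ii),(viii) give the reverse, pinning the term to $x$ exactly. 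The case $x=z$ should force $m(x,y,x)=x$ by symmetry of the two conjuncts together with $(F3)$. I expect the identity $m(x,x,z)=z$ to be the delicate one and may require replacing $\bsim$ by $\mar$ in one slot, or adding a third conjunct; this is the main obstacle — finding the precise term whose three Pixley equations are all consequences of $(F1)$--$(F7)$ and Proposition \ref{3.5}, rather than merely inequalities.

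Once such a Pixley term $m$ is in hand, congruence permutability and congruence distributivity both follow from the standard Pixley characterization of arithmetical varieties (every arithmetical variety is congruence permutable and congruence distributive). As a fallback, if a single Pixley term proves elusive, I would instead: (a) produce a Mal'cev term $p(x,y,z)$, e.g. $p(x,y,z)=\bigl((x\sim y)\bsim z\bigr)\wedge\bigl((z\sim y)\bsim x\bigr)$ specialised appropriately, checking only $p(x,y,y)=x$ and $p(x,x,z)=z$, which gives permutability; and separately (b) exhibit a majority term $M(x,y,z)=(x\wedge y)\vee(y\wedge z)\vee(z\wedge x)$ — but lacking joins, use $M(x,y,z)=\bigl((x\ra y)\mar((z\ra y)\mar \text{something})\bigr)$, or more simply appeal to Proposition \ref{pr:property} and Theorem \ref{5.6} to transfer congruence-distributivity from the associated pseudo-$BCK$-meet-semilattice (which is congruence distributive as it is a meet-semilattice-based variety), via the congruence-preserving correspondence established in Section 4. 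Either way, the two properties are then immediate.
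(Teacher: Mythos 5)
Your overall strategy coincides with the paper's: exhibit a Mal'cev term for congruence permutability and a majority term for congruence distributivity, then invoke the standard characterizations (the paper cites \cite[Lemma 1.24]{GJKO}). Your candidate $((x\sim y)\bsim z)\wedge((z\sim y)\bsim x)$ is essentially the paper's Mal'cev term $M(x,y,z)=((x\sim y)\bsim z)\wedge((y\sim z)\bsim x)$. But the proposal stops exactly where the proof begins: you never verify the Mal'cev identities, and you say yourself that finding a term whose identities actually follow from $(F1)$--$(F7)$ is ``the main obstacle.'' That verification is short but it is the whole content: with the arguments of the non-commutative $\sim$ ordered so that Proposition \ref{3.5}(ii) (the inequality $a\le (c\sim a)\bsim c$) applies, one conjunct collapses to $1\bsim x=x$ by $(F2)$ and $(F3)$ while the other is $\ge x$, giving $M(x,y,y)=M(y,y,x)=x$. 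Because $\sim$ and $\bsim$ are not commutative, the order of arguments inside each conjunct is not a cosmetic choice, and your hedging (``may require replacing $\bsim$ by $\mar$ in one slot, or adding a third conjunct'') shows this step is not actually carried out.

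The distributivity half is a genuine gap. You produce no working majority term: $(x\wedge y)\vee(y\wedge z)\vee(z\wedge x)$ is unavailable since there is no join, and both of your fallbacks fail. The transfer via Section 4 only covers invariant pseudo equality algebras (Theorem \ref{4.4} identifies $Inv(\mathcal{A})$ with $\mathcal{B}$, not all of $\mathcal{A}$ with all pseudo $BCK$-meet-semilattices), and the premise that a ``meet-semilattice-based variety'' is congruence distributive is false: the variety of meet-semilattices is not even congruence modular. The paper's majority term is $P(x,y,z)=((x\ra y)\mar y)\wedge((y\ra z)\mar z)\wedge((z\ra x)\mar x)$, and the identities $P(x,x,y)=P(x,y,x)=P(y,x,x)=x$ follow from Proposition \ref{3.5}(vi), (vii) (which gives $a\le b\mar a$) and (viii) (which gives $a\le (a\ra b)\mar b$). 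Without such a term, or a sequence of J\'onsson terms, the distributivity claim remains unproved.
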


\begin{proof}
Let $(X;\sim,\bsim,\wedge,1)$ be a pseudo equality algebra. Define
two terms $M(x,y,z):=((x\sim y)\bsim z)\wedge ((y\sim z)\bsim x)$ and
$P(x,y,z):=((x\ra y)\mar y)\wedge ((y\ra z)\mar z)\wedge ((z\ra x)\mar x)$ for all $x,y,z\in X$. We show that they are Mal'cev terms, that is, they satisfy identities $M(x,y,y)=M(y,y,x)=x$ and $P(x,x,y)=P(x,y,x)=P(y,x,x)=x$.

By $(F2)$, $(F3)$ and Proposition \ref{3.5}(ii), we have
$M(x,y,y)= ((x\sim y)\bsim y)\wedge ((y\sim y)\bsim x)=((x\sim y)\bsim y)\wedge x=x$ and
$M(y,y,x)=((y\sim y)\bsim x)\wedge ((y\sim x)\bsim y)=x\wedge ((y\sim x)\bsim y)=x$, for all
$x,y\in X$. Moreover, by Proposition \ref{3.5}(vi) and (viii),
$P(x,x,y)=((x\ra x)\mar x)\wedge ((x\ra y)\mar y)\wedge ((y\ra x)\mar x)=x\wedge ((x\ra y)\mar y)\wedge ((y\ra x)\mar x)=x$ and by Proposition \ref{3.5}(vii),
$P(x,y,x)=((x\ra y)\mar y)\wedge ((y\ra x)\mar x)\wedge ((x\ra x)\mar x)=x$
and $P(y,x,x)=((y\ra x)\mar x)\wedge ((x\ra x)\mar x)\wedge ((x\ra y)\mar y)=x$. Therefore, by
\cite[Lemma 1.24]{GJKO}, the theorem is proved.
\end{proof}

\section{Conclusion}%5

We have showed that pseudo equality algebras in the sense of \cite{JK} are always equality algebras in the sense of \cite{J}. Therefore, we have presented a new version of pseudo equality algebras which generalize equality algebras which could be assumed for a possible algebraic semantics of fuzzy type theory.

We have described their basic properties, presented examples of pseudo equality algebras. We pointed out a close relation between invariant pseudo BCK-algebras and pseudo BCK-meet-semilattices with a special condition, Theorem \ref{4.4}. This result corresponds in some sense with ideas of Kabzi\'nski and Wro\'nski \cite{KaWr} on the equivalent algebraic semantics of the $\leftrightarrow$-fragment of intuitionistic logic.

We describe congruences equivalently via normal closed deductive systems, Theorem \ref{5.6} and we show that the variety of pseudo equality algebras is subtractive, Theorem \ref{subtractive}, and congruence permutable and congruence distributive, Theorem \ref{Permutable}.

These results may be assumed as an additional step in establishing an axiomatization of special subclasses of substructural logics.

\end{document}